\documentclass[11pt,A4paper]{article}

\usepackage[left=30mm,right=30mm,top=25mm,bottom=30mm]{geometry}
\usepackage{labelfig}
\usepackage{epsfig}
\usepackage{epstopdf}
\usepackage{relsize}
\usepackage{float}

\usepackage{xfrac}

\usepackage[percent]{overpic}

\usepackage{relsize}
\usepackage{color}
\usepackage{amsthm,amsmath,amssymb}
\usepackage{booktabs}
\usepackage{mathpazo}
\usepackage{microtype}
\usepackage[T1]{fontenc}
\usepackage{bm}
\usepackage{sectsty}
\usepackage[
	pdftitle={PDFTitle},
	pdfauthor={Hugo Parlier},
	ocgcolorlinks,
	linkcolor=linkred,
	citecolor=linkred,
	urlcolor=linkblue]
{hyperref}

\usepackage{mathtools}

\definecolor{linkred}{RGB}{255,128,128}

\definecolor{linkblue}{RGB}{100, 210, 210}

\usepackage[hang,flushmargin]{footmisc}
\usepackage{enumitem}
\usepackage{titlesec}
	\titlespacing{\section}{0pt}{12pt}{0pt}
	\titlespacing{\subsection}{0pt}{6pt}{0pt}

\titlelabel{\thetitle.\quad}

\makeatletter 

\long\def\@footnotetext#1{%
\H@@footnotetext{%
\ifHy@nesting 
\hyper@@anchor{\@currentHref}{#1}%
\else 
\Hy@raisedlink{\hyper@@anchor{\@currentHref}{\relax}}#1%
\fi 
}}

\def\@footnotemark{%
\leavevmode 
\ifhmode\edef\@x@sf{\the\spacefactor}\nobreak\fi 
\H@refstepcounter{Hfootnote}%
\hyper@makecurrent{Hfootnote}%
\hyper@linkstart{link}{\@currentHref}%
\@makefnmark 
\hyper@linkend 
\ifhmode\spacefactor\@x@sf\fi 
\relax 
}%

\ifFN@multiplefootnote%
\renewcommand*\@footnotemark{%
\leavevmode 
\ifhmode 
\edef\@x@sf{\the\spacefactor}%
\FN@mf@check 
\nobreak 
\fi 
\H@refstepcounter{Hfootnote}%
\hyper@makecurrent{Hfootnote}%
\hyper@linkstart{link}{\@currentHref}%
\@makefnmark 
\hyper@linkend 
\ifFN@pp@towrite 
\FN@pp@writetemp 
\FN@pp@towritefalse 
\fi 
\FN@mf@prepare 
\ifhmode\spacefactor\@x@sf\fi 
\relax%
}%
\fi 

\makeatother 

\theoremstyle{plain}
\newtheorem{theorem}{Theorem}[section]
\newtheorem{proposition}[theorem]{Proposition}
\newtheorem{lemma}[theorem]{Lemma}

\newtheorem{property}[theorem]{Property}

\makeatletter
\newtheorem*{rep@theorem}{\rep@title}
\newcommand{\newreptheorem}[2]{%
\newenvironment{rep#1}[1]{%
 \def\rep@title{#2 \ref{##1}}%
 \begin{rep@theorem}}%
 {\end{rep@theorem}}}
\makeatother

\newreptheorem{theorem}{Theorem}
\newreptheorem{corollary}{Corollary}

\theoremstyle{definition}
\newtheorem{definition}[theorem]{Definition}

\newtheorem{remark}[theorem]{Remark}

\newcommand{\R}{{\mathbb R}}

\newcommand{\Hyp}{{\mathbb H}}

\newcommand{\Z}{{\mathbb Z}}

\newcommand{\A}{{\mathcal A}}
\newcommand{\CC}{{\mathcal C}}

\newcommand{\OO}{{\mathcal O}}

\newcommand{\g}{{\mathrm{geod}}}
\newcommand{\G}{{\mathcal G}}

\newcommand{\M}{{\mathcal M}}
\newcommand{\PSL}{{\rm PSL}}

\newcommand{\arcsinh}{{\,\rm arcsinh}}

\newcommand{\Sum}{\mathlarger{\mathlarger{ \sum}}}

\newcommand{\teta}{\eta_\circ}
\newcommand{\trun}{\circ}
\newcommand{\tteta}{\eta_{\mathrel{\scalebox{0.4}{$\odot$}}}}
\newcommand{\dt}{{\mathrel{\scalebox{0.4}{$\odot$}}}}
\newcommand{\OB}{\OO\hspace{-0.1em} \beta}
\newcommand{\OBM}{{\OO\hspace{-0.1em} \beta}_{\hspace{-0.1em}M}}
\newcommand{\OM}{\OO_{\hspace{-0.1em}M}}
\newcommand{\OMA}{\OO_{\hspace{-0.1em}M}^{\,\alpha}}
\newcommand{\OBA}{{\OO\hspace{-0.1em} \beta}^{\,\alpha}_{\hspace{-0.1em} M}}

\newcommand{\curves}{\CC(\Sigma)}
\newcommand{\arcs}{\A(\Sigma)}
\newcommand{\mcg}{{\rm Mod}}

\linespread{1.25}

\sectionfont{\large \bfseries}
\subsectionfont{\normalsize}

\setlength{\parindent}{0pt}
\setlength{\parskip}{6pt}


\long\def\symbolfootnote[#1]#2{\begingroup%
\def\thefootnote{\fnsymbol{footnote}}\footnote[#1]{#2}\endgroup}

\def\blfootnote{\xdef\@thefnmark{}\@footnotetext}

\begin{document}

{\Large \bfseries 
Geodesic and orthogeodesic identities on hyperbolic surfaces}

{\large Hugo Parlier\symbolfootnote[1]{\normalsize Supported by the Luxembourg National Research Fund OPEN grant O19/13865598.\\
{\em 2020 Mathematics Subject Classification:}\\Primary: 32G15, 37D40, 57K20. Secondary: 30F60, 37E35, 53C22.\\
{\em Key words and phrases:} Identities, orthogeodesics, moduli spaces.}}

{\bf Abstract.} 
The lengths of geodesics on hyperbolic surfaces satisfy intriguing equations, known as identities, relating these lengths to geometric quantities of the surface. This paper is about a large family of identities that relate lengths of closed geodesics and orthogeodesics to boundary lengths or number of cusps. These include, as particular cases, identities due to Basmajian, to McShane and to Mirzakhani and Tan-Wong-Zhang. In stark contrast to previous identities, the identities presented here include the lengths taken among {\it all} closed geodesics. 

\vspace{1.2cm}

\section{Introduction}

The relationship between lengths of closed geodesics, hyperbolic surfaces, and their underlying moduli spaces is intriguing and has been studied from many different perspectives. One exciting feature appeared in the work of Basmajian \cite{Basmajian} and McShane \cite{McShane} who discovered two different {\it identities} which relate infinite sums, where the terms depend on lengths of geodesics, to geometric quantities. In the case of Basmajian, the geometric quantity is the boundary length of the surface and in the original McShane identity, it is the length of a horocycle surrounding a cusp. Both of these identities have been generalized in multiple contexts, most famously perhaps being the generalization of the McShane identity by Mirzakhani who then showed how to use the identity to compute volumes of moduli spaces \cite{Mirzakhani}. 

The exact statements of these original identities will be given below (and indeed scrutinized carefully), and although they seem at first glance to be unrelated, they share an aesthetic feature of simplicity. The Basmajian identity is a sum over all orthogeodesics of terms $\log(\coth(\ell/2))$ that depend naturally on the length $\ell$ of the orthogeodesic. The sum gives the boundary length. The original McShane identity is on a one cusped torus, and is a sum over all simple closed geodesics of terms $\frac{1}{e^\ell+1}$ depending only on the length $\ell$ of the simple closed geodesic. The sum is equal to $1/2$. Their many successors have plenty of redeeming features, but are no match in terms of simplicity of the statement. One of the goals of the present paper is to show how these identities are related, and are in fact part of a family of identities that involve both lengths of closed geodesics and orthogeodesics.

The identities will be for orientable hyperbolic surfaces (of finite type) with non-empty boundary. Points in the moduli space $\M(\Sigma)$ of hyperbolic surfaces $X$ homeomorphic to fixed given topological surface $\Sigma$ with be denoted $\M(X)$. A hyperbolic surface $X$ will be required to have either simple closed geodesics or cusps as boundary. Non-empty boundary means that $\pi_1(\Sigma)$ is a free group, and conjugacy classes of elements in $\pi_1(\Sigma)$ correspond to (free) homotopy classes of closed curves $\curves$, which it turn corresponds to $\G(X)$, the set of oriented closed geodesics of $X$. $\G(X)$ will be the set of oriented closed geodesics of $X$, and can thus be viewed from an algebraic, topological or geometric viewpoint. In particular, if you move $X$ in its moduli space, then $\G(X)$ remains the same from the algebraic or topological viewpoint. Similarly, we use $\OO=\OO(X)$ for the set of all oriented orthogeodesics of $X$: these are geodesic segments with endpoints on and orthogonal to $\partial X$. If one of the endpoints is a cusp $c$, an orthogeodesic that leaves or returns to or from $c$ is of infinite length so the term orthogonal refers to it being orthogonal to a (or any) horocyclic boundary of $c$.

The identities are all given by a choice of subset $M \subset \curves$ which is {\it coherent}, by which it is meant that $M$ must satisfy a property which we now outline. An element in $M$ is a closed curve, and if it is not simple and primitive, it might contain proper subloops. Being coherent means that if $\gamma\in M$, then none of the subloops of $\gamma$ belong to $M$ (see Section \ref{ss:curves} for a more detailed definition). Note that orthogeodesics can also contain subloops, and if an orthogeodesic has a subloop freely homotopic to an element $\gamma\in \curves$, we say that it {\it supports} $\gamma$.

Some immediate examples of coherent markings are when $M$ is empty or $M$ is any single curve. Any coherent marking (finite or infinite) can be obtained as follows: take any curve $\gamma$ and add it to $M$, now add any other curve that is not homotopic to a subloop of a curve in $M$, and so forth. Note that there is no requirement that if $\gamma \in M$, then $\gamma^{-1}$ also lie in $M$. A perhaps less obvious example: $M$ is the set of all simple primitive closed geodesics (with both orientations). Or, alternatively, choose exactly one orientation for each simple closed geodesic, and let $M$ be the set of those. More generally, any subset of a coherent marking is coherent, and using this fact it is not hard to convince oneself that for fixed $\Sigma$, there are uncountably many coherent markings. Some of them are mapping class group invariant, but in general they are not. Other examples of a coherent markings can be obtained by fixing a filling closed curve $\gamma$, and taking the full mapping class group orbit of $\gamma$ (thus $M = \mcg_\Sigma(\gamma)$). 

This allows, given a coherent marking $M$, for a separation of $\OO$ into two sets: those that are supportive of elements of $M$, and those that are not. Note that, by definition, all simple orthogeodesics are unsupportive. Among the unsupportive elements of $\OO$ (denoted $\OM$), certain are {\it peripheral}, meaning that together with a boundary arc they form a homotopy class in $M$. A more detailed definition is given in Section \ref{sec:prelim}, but for now we denote the set of orthogeodesics peripheral to $M$ by $\OM$ and write $\partial \eta \ni \alpha$ if $\eta$ is peripheral to $\alpha$.

The abstract identity is as follows:
\begin{theorem}\label{thm:abstract}
Let $X$ be a hyperbolic surface with $\partial X \neq \emptyset$ consisting of cusps and/or simple closed geodesics. Let $\beta$ be a boundary element, $M$ a coherent marking of $\curves$ and let $\OB$ be the set of orthogeodesics leaving from $\beta$.

If $\beta$ is a closed geodesic, then:
\begin{equation*}
\ell_X(\beta) = \Sum_{\eta \in \OBM} \phi(\eta) + \Sum_{\alpha \in M}\left(\Sum_{\eta \in \OBA} \psi(\eta)\right)
\end{equation*}
where $\phi(\eta)$ is a measure that depends only on the length of $\eta$, and $\psi(\eta)$ is a measure depending only the length of $\eta$ and $\alpha$ (the curve $\eta$ is peripheral to). 

When $\beta$ is a cusp:
$$
2 = \Sum_{\eta \in \OBM} \phi_\trun(\eta) +\Sum_{\alpha \in M}\left(\Sum_{\eta \in \OBA} \psi_\dt(\eta)\right)
$$
where $\phi_\trun(\eta)$ is a measure that depends only on the truncated length of $\eta$, and $\psi_\dt(\eta)$ depends only the length of doubly truncated length of $\eta$ and $\alpha$ (the curve $\eta$ is peripheral to). \end{theorem}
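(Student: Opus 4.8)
The plan is to establish both identities by a single Basmajian-style dissection of the boundary element, refined by a stopping rule adapted to the coherent marking $M$. Lift to the universal cover $\Hyp$, fix a lift $\tilde\beta$ of $\beta$ --- a complete geodesic, or a horocycle when $\beta$ is a cusp --- bounding the region covering $X$, and for each point $p$ on $\beta$ (resp.\ on a fixed horocycle about $\beta$) let $r_p$ be the geodesic ray leaving $p$ perpendicularly into $X$. We will cut $\beta$ into pieces indexed by the orthogeodesics occurring on the right-hand side by sorting the points $p$ according to the behaviour of $r_p$; since $\OB$ is countable and only countably many $\OBA$ are nonempty, and since the pieces will form a partition of a set of finite length, convergence of the sums is automatic, and the whole content lies in defining the stopping rule almost everywhere, checking that the pieces are pairwise disjoint, and computing the length of each piece.

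\emph{The stopping rule and the partition.} Following the setup of Section~\ref{sec:prelim}, follow $r_p$ into $X$ and stop it at the first moment that either it returns to $\partial X$, or the geodesic arc it has traced has closed up a loop freely homotopic to an element $\alpha\in M$. Since $r_p$ is itself a geodesic, a return through an \emph{unsupportive} orthogeodesic can only occur \emph{before} any such loop is closed; in that case we file $p$ according to the orthogeodesic of first return --- in the block of some $\eta\in\OBM$ if $\eta$ is peripheral to no element of $M$, and in the block of some $\eta\in\OBA$ if $\eta$ is peripheral to $\alpha$. Otherwise $r_p$ first closes a loop homotopic to some $\alpha\in M$, the traced arc determines an orthogeodesic $\eta\in\OBA$ from $\beta$ peripheral to $\alpha$, and $p$ is filed in its block. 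The remaining points --- whose ray neither returns nor closes such a loop --- all lie in the set of $p$ whose perpendicular ray never meets $\partial X$ again, which is the shadow on $\beta$ of the (conical) limit set and has zero length, independently of $M$. This yields, up to a null set,
\[
\beta \;=\; \bigsqcup_{\eta\in\OBM} I_\eta \ \sqcup\ \bigsqcup_{\alpha\in M}\ \bigsqcup_{\eta\in\OBA} J_\eta .
\]
Coherence of $M$ is exactly what forces every orthogeodesic peripheral to an element of $M$ to be unsupportive and makes the $\alpha$ attached to a closed loop unambiguous, so that no $p$ is filed in two blocks.

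\emph{Lengths of the pieces.} For $\eta\in\OBM$, the block $I_\eta$ is a genuine Basmajian shadow: in $\Hyp$ the line $\tilde\beta$ and the boundary lift reached by the rays of $I_\eta$ are disjoint geodesics at distance $\ell_X(\eta)$, so $I_\eta$ is an interval whose length is the elementary Basmajian term $\phi(\eta)$ (of the form $\log\coth$ of half the length), a function of $\ell_X(\eta)$ alone. For $\eta\in\OBA$ with $\partial\eta\ni\alpha$, the block $J_\eta$ consists of the feet of the rays that first wind around $\alpha$ ``through $\eta$''; in $\Hyp$ the relevant data is $\tilde\beta$ together with the axis of the corresponding conjugate of $\alpha$ and the image of that axis under the holonomy of $\alpha$, a configuration determined up to isometry by $\ell_X(\eta)$ and $\ell_X(\alpha)$, so the winding feet form an interval whose length is --- by the hyperbolic trigonometry of that configuration, the computation also underlying the gaps in the identities of Mirzakhani and of Tan--Wong--Zhang --- an explicit function $\psi(\eta)$ of $\ell_X(\eta)$ and $\ell_X(\alpha)$. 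Adding the lengths of all pieces gives $\ell_X(\beta)$ on the left and $\sum_{\eta\in\OBM}\phi(\eta)+\sum_{\alpha\in M}\sum_{\eta\in\OBA}\psi(\eta)$ on the right, which is the first identity.

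\emph{The cusp case, and the main difficulty.} When $\beta$ is a cusp, run the identical argument on a horocycle $H$ about $\beta$ normalised to the fixed length $2$ (the normalisation under which the cusped identities specialise correctly, e.g.\ to McShane's), measuring orthogeodesics by the truncated and doubly-truncated lengths of Section~\ref{sec:prelim}; the null set now also absorbs the rays escaping out $\beta$ or out another cusp, and the shadow computations, carried out with horocyclic rather than geodesic feet, produce $\phi_\trun(\eta)$ depending on the truncated length and $\psi_\dt(\eta)$ on the doubly-truncated length and $\ell_X(\alpha)$, with total $\ell(H)=2$. I expect the only real obstacle to be the setting-up of the stopping rule: one must make $p\mapsto(\text{stopped orthogeodesic})$ a bona fide measurable map with the asserted range, and --- this is precisely the point at which coherence is used --- show that peripheral orthogeodesics are unsupportive, which is what guarantees the blocks $I_\eta,J_\eta$ are mutually disjoint and jointly exhaust $\beta$ up to measure zero. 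Once this combinatorial bookkeeping is pinned down, the null-set claim is the standard Basmajian argument and the evaluations of $\phi,\psi$ (and of $\phi_\trun,\psi_\dt$) are routine hyperbolic trigonometry.
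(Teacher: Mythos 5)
Your overall architecture is the same as the paper's: drop perpendicular rays from $\beta$, stop each ray either at its first return to $\partial X$ or at the first moment it closes a loop freely homotopic to a marked curve, partition $\beta$ into the resulting shadows up to a null set, and compute the lengths of the pieces by hyperbolic trigonometry. But your filing rule does not produce the right-hand side of the theorem. When a ray returns to $\partial X$ through an unsupportive orthogeodesic $\eta$ that happens to be peripheral to some $\alpha\in M$, you file its foot in the block of $\eta\in\OBA$; in the identity those returning feet are exactly the Basmajian shadow of $\eta$, of length $\phi(\eta)$, and they belong to the first sum, which runs over \emph{all} of $\OBM$, peripheral or not. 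Each peripheral unsupportive $\eta$ appears in both sums (the paper says so explicitly after Theorem \ref{thm:boundarylength}): its $\psi$-gap is a different set, namely the feet of rays that close a loop around $\alpha$ while staying in the half-pants of $\eta$, lying alongside the Basmajian shadow. As written, your block $J_\eta$ is either the union of these two sets (so its length is $\phi(\eta)+\psi(\eta)$ rather than $\psi(\eta)$, the first sum loses its peripheral terms, and the assignment is ambiguous when both curves of $\partial\eta$ lie in $M$), or else the returning feet are silently dropped; either way the pieces do not add up to the stated formula.

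Two further points are genuine gaps. First, when a ray first closes a loop homotopic to $\alpha\in M$, the orthogeodesic obtained from the traced arc is peripheral to $\alpha$ but need not be unsupportive, so it need not lie in $\OBA$ at all; your claim that coherence ``forces every orthogeodesic peripheral to an element of $M$ to be unsupportive'' is false, since an arc peripheral to $\alpha$ can contain a subloop homotopic to a different marked curve. The paper repairs this with an iterative re-anchoring argument: if the first candidate $\eta_0$ supports a marked curve, replace it by $\eta_1$ built from the first marked loop along $\eta_0$, observe that the gaps nest, and show the process terminates, landing on an unsupportive peripheral orthogeodesic whose gap contains the foot point. Coherence actually enters elsewhere, in the disjointness of two type-2 gaps: moving across the gap of $\eta_i$, the new loops created at self-intersection points are freely homotopic to $\alpha_i$ or to proper subloops of $\alpha_i$, and coherence excludes the latter from $M$, so the first marked loop never changes inside a gap; you assert disjointness but not this mechanism. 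Second, your null-set claim ``independently of $M$'' via the limit-set shadow is only valid when $\partial X$ contains a geodesic boundary component; when all boundary components are cusps, the set of rays that never return to $\partial X$ has \emph{full} measure, and one needs the paper's separate ergodicity argument (using $M\neq\emptyset$ and the positive measure of the loop sets $L(\alpha)$ in $T^1(X)$) to show almost every ray eventually closes a marked loop. Without these three repairs the proof does not go through.
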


The truncated length of an orthogeodesic leaving from a cusp is the restriction of the orthogeodesic to the subarc that leaves from the boundary of a standard horocyclic neighborhood of the cusp. For an orthogeodesic that leaves and returns to the same cusp, the doubly truncated length is the length of the subarc that leaves from the cusp neighborhood until it returns to the cusp neighborhood for the last time. (Note that truncated orthogeodesics might return multiple times to the the horocyclic neighborhood.) The functions (or measures) are often called gap functions or just gaps because they are measures of segments of the boundary associated to index terms. They are not only abstract, and can be quantified. Here is a first quantification.

\begin{theorem}\label{thm:boundarylength}
Let $X\in \M(\Sigma)$ be a hyperbolic surface of finite type with $\partial X \neq \emptyset$ containing at least one simple closed geodesic, and $M$ a coherent marking of $\curves$. The lengths of curves and orthogeodesics of $X$ satisfy
\begin{eqnarray*}
\ell(\partial X) = &
\Sum_{\eta \in \OM} & \log \left( \coth^2 (\eta/2) \right)\\
 +& \Sum_{\alpha \in M} \,\, \Sum_{\eta \in \OMA} &\log\left( \frac{\cosh(\alpha/2) + \sqrt{\cosh^2(\alpha/2) + \cosh^2(\eta/2)-1}}{\sinh(\alpha/2) + \sqrt{\cosh^2(\alpha/2) + \cosh^2(\eta/2)-1}} \right).
\end{eqnarray*}
\end{theorem}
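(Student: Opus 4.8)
The plan is to deduce Theorem~\ref{thm:boundarylength} from the abstract identity (Theorem~\ref{thm:abstract}) by summing over the boundary and computing the two gap functions $\phi$ and $\psi$ explicitly. First, I would apply the first part of Theorem~\ref{thm:abstract} to each simple closed geodesic boundary component $\beta$ of $X$ and then sum the resulting identities over all boundary components. Since every orthogeodesic has two endpoints, each lying on some boundary component, the double collection $\{\OB\beta : \beta \in \partial X\}$ reorganizes (counting each oriented orthogeodesic from the side of its initial endpoint) into $\OO_M$ and the $\OO_M^\alpha$'s, so that $\sum_{\beta} \ell_X(\beta) = \ell(\partial X)$ on the left and the two families of gap terms on the right regroup into the two sums appearing in the statement. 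I should double-check the orientation bookkeeping here: the set $\OM$ in the statement should be understood as a set of oriented orthogeodesics (or each unoriented one counted once per endpoint), matching exactly the per-$\beta$ decomposition.

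The heart of the matter is then to identify $\phi(\eta)$ and $\psi(\eta)$ as the two explicit logarithmic expressions. For $\phi$: an unsupportive, non-peripheral orthogeodesic $\eta$ of length $\ell$ cuts out, on the boundary $\beta$ it emanates from, a ``gap'' interval whose endpoints are determined by the two perpendicular geodesics to $\beta$ that are asymptotic to the lift of $\eta$ (equivalently, by projecting the half-disc based at the far endpoint of $\eta$ into the geodesic $\beta$, in the style of Basmajian). A standard hyperbolic-trigonometry computation in the upper half-plane — placing $\beta$ as a geodesic, the foot of $\eta$ at a convenient point, and measuring the shadow of a disc of the appropriate radius — gives the length of this gap as $\log(\coth^2(\ell/2))$; this is precisely the Basmajian gap and I would cite or reproduce that computation. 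For $\psi$: here $\eta$ is peripheral to $\alpha \in M$, meaning $\eta$ together with an arc of $\beta$ bounds (up to homotopy) the curve $\alpha$; so $\eta$ spirals toward the geodesic representative of $\alpha$, and the relevant gap on $\beta$ is now bounded by the perpendicular from $\beta$ to (the axis of) $\alpha$ on one side and by the behaviour of $\eta$ on the other. Working in the hyperbolic plane with $\alpha$ realized as a geodesic whose associated hyperbolic translation has translation length $\ell_X(\alpha)$, and $\eta$ as the common perpendicular segment, the width of the shadow works out to $\log\big((\cosh(\alpha/2)+R)/(\sinh(\alpha/2)+R)\big)$ with $R=\sqrt{\cosh^2(\alpha/2)+\cosh^2(\eta/2)-1}$; the term under the square root is what one gets from the hyperbolic law of cosines for the right-angled configuration formed by $\beta$, $\eta$, and the perpendicular to $\alpha$.

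I would organize the computation of $\psi$ as the main lemma: set up coordinates so that the geodesic carrying $\alpha$ is the imaginary axis (or a fixed geodesic), the orthogeodesic $\eta$ is the common perpendicular from $\beta$ to this axis, and then the two boundary points of the gap on $\beta$ are the endpoints at infinity of, respectively, the perpendicular to $\alpha$ through the relevant point and the image of that configuration under the deck transformation corresponding to $\alpha$. Computing the cross-ratio / hyperbolic distance between these two points along $\beta$ and simplifying via half-angle identities yields the claimed formula. The main obstacle I anticipate is exactly this trigonometric identification — keeping precise track of which perpendicular, which horocycle or axis, and which translate bounds each gap, and confirming that as $\alpha$ ranges over $M$ and $\eta$ over $\OO_M^\alpha$ these gaps together with the $\phi$-gaps tile the boundary $\beta$ without overlap or omission (this tiling/covering statement is really the content of Theorem~\ref{thm:abstract}, so here it reduces to checking that the measures of the pieces are the claimed functions of $\ell(\eta)$ and $\ell(\alpha)$ alone). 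The cusp case of the theorem is not needed since $\beta$ ranges only over the simple closed geodesic boundary components, but the same gaps on those components are subdivided by contributions from cusps too — so I must make sure $\OM$ and $\OMA$ are the full sets of unsupportive/peripheral orthogeodesics with an endpoint on $\partial X$, irrespective of the nature of the \emph{other} endpoint.
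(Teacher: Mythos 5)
Your overall route is the same as the paper's: establish the identity one boundary geodesic at a time via the abstract identity (Theorem \ref{thm:abstract}), compute the two gap functions explicitly, and sum over the components of $\partial X$. The summation bookkeeping is fine as you set it up (the sets $\OM$, $\OMA$ consist of \emph{oriented} orthogeodesics, so they are the disjoint unions over boundary components $\beta$ of the sets $\OBM$, $\OBA$), with one point to make explicit rather than worry about in reverse: orthogeodesics emanating \emph{from} a cusp belong to $\OM$ but to no $\OBM$ with $\beta$ a geodesic; they have infinite length, so their terms vanish and the full index sets of the theorem are recovered. The $\phi$-computation is indeed Basmajian's trirectangle computation, giving $\phi(\eta)=2\log\coth(\eta/2)$.

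The genuine problem is in your plan for $\psi$, which you yourself flag as the heart of the matter: two of your geometric identifications are wrong and, followed literally, would measure the wrong segment. First, $\eta$ is \emph{not} the common perpendicular from $\beta$ to the axis of $\alpha$; it is an orthogeodesic leaving and returning to $\beta$ that is peripheral to $\alpha$, and the common perpendicular is an auxiliary segment $h$ of the associated (immersed) half-pants, satisfying $\sinh(h)\sinh(\alpha/2)=\cosh(\eta/2)$ --- it is $\cosh(\eta/2)$, not $h$, that enters the final formula. Second, the gap measured by $\psi(\eta)$ is bounded neither by the foot of that perpendicular nor by a translate under the deck transformation of $\alpha$: its two endpoints on $\beta$ are (i) the endpoint of the Basmajian gap of $\eta$ on the side corresponding to $\alpha$, and (ii) the basepoint of the orthoray that stays in the half-pants and spirals forever around $\alpha$ (the orthoray forward-asymptotic to the axis of $\alpha$ in the lifted picture). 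The paper's computation unfolds the half-pants into a symmetric right-angled hexagon and uses the relations $\sinh(x+y)\sinh(\eta/2)=\cosh(\alpha/2)$, $\sinh(h)\sinh(\alpha/2)=\cosh(\eta/2)$ and $\sinh(h)\sinh(y)=1$ to find the distance $x$ from the basepoint of $\eta$ to the spiraling orthoray, and then sets $\psi(\eta)=x-\phi(\eta)/2$. This subtraction of the half Basmajian gap is essential and absent from your plan: each $\eta\in\OMA$ contributes to \emph{both} sums of the identity, and without removing $\phi(\eta)/2$ you would double count it (while measuring only up to the perpendicular foot would instead give $x+y$ and a different formula). Once the gap endpoints are identified correctly, your proposed trigonometric computation does reduce to exactly these pentagon-type relations and yields the stated expression.
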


Note that any $\eta \in \OM$ appears twice: once in the first sum and once in the second. It would of course be possible to regroup these terms, but geometrically these terms describe different phenomena.

The above equation can be expressed in terms of half-traces (that is the hyperbolic cosine of half-lengths: $a=\cosh(\alpha/2)$, $b=\cosh(\beta/2)$, and $t=\cosh(\eta/2)$). By taking the exponential, the following product formula appears:
\begin{equation}\label{eq:halftraceformulation}
b + \sqrt{b^2-1} =\left( \prod_{\eta} \frac{t^2}{t^2-1}\right) \prod_{\alpha \in M} \left( \prod_{\partial \eta \ni \alpha} \left( \frac{a + \sqrt{a^2+ t^2-1 }}{\sqrt{a^2-1} + \sqrt{a^2+ t^2-1 }}\right) \right) 
\end{equation}
where the first product is taken over all orthogeodesics unsupportive of $M$, and the last product is over all orthogeodesics unsupportive of $M$ {\it and} peripheral to $\alpha$. 

The first sum (or product in the half-trace formulation) is exactly the same term as the term in the Basmajian identity \cite{Basmajian}, and is the projection of a copy of $\beta$ onto itself (following the reverse direction of the orthogeodesic). And in fact, when $M = \emptyset$, the above identity is the Basmajian identity: all orthogeodesics are unsupportive, the first sum is empty, and the second sum sums over all orthogeodesics leaving from $\beta$.

When $M$ however is the set of all (primitive) simple closed geodesics, taken with both possible orientations and including boundary curves, the above identity is an expression of the McShane identity for surfaces with geodesic boundary due to Mirzakhani \cite{Mirzakhani} and Tan-Wong-Zhang \cite{Tan-Wong-Zhang}. This is another extremal case of the identity, because it is a sum over simple orthogeodesics. And simple orthogeodesics are always unsupportive. Hence, the first sum has the "smallest" possible index set, in contrast with the Basmajian case where the first sum has the "largest" possible set. The expression of the identity is clearly different from previous formulations however, and this is due to several factors. The gap functions in the work of Mirzakhani and Tan-Wong-Zhang are all computed using the boundary curves of the associated pair of pants. In particular, the length of $\beta$ plays a part in the function and the functions depend on two factors. Here the gaps are divided into parts: one part corresponding the Basmajian type term, which only depends on the associated ortholength, and two different gaps on each side of the orthogeodesic, which have another dynamical interpretation to be be discussed later, and which depend on two geometric quantities, the ortholength and the length of $\alpha$. This difference in interpretation of the gaps shows how to express the length of $\beta$ via a sum of gaps which depend on geometric quantities which do not involve $\beta$, similarly to the original Basmajian identity.

By taking an appropriate geometric limit, the identity can be quantified as follows on a surface with cusps. Note that one of the sum disappears. 

\begin{theorem}\label{thm:ncusps}
Let $X$ be a hyperbolic surface with $\partial X \neq \emptyset$ consisting of $n>0$ cusps, and $M$ a coherent marking.
\begin{equation*}
n = \frac{1}{2}\Sum_{\alpha \in M} e^{-\frac{\alpha}{2}} \left( \Sum_{\eta \in \OMA} e^{-\frac{\tteta}{2}}\right)
\end{equation*}
\end{theorem}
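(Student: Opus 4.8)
\emph{Proof plan.} The plan is to deduce Theorem~\ref{thm:ncusps} from Theorem~\ref{thm:boundarylength} by a pinching limit in which the geodesic boundary components are collapsed to cusps. Given $X$ with $n$ cusps, I would pick a family $X_t$ of hyperbolic surfaces with the same topology but with $n$ geodesic boundary components, each of length exactly $t$, chosen so that $X_t\to X$ geometrically as $t\to 0^+$ (shrink the boundary curves in Fenchel--Nielsen coordinates). For $t$ small the curve complex, the orthogeodesic set and the coherent marking $M$ are canonically identified for $X_t$ and for $X$, so the index sets $\OM$ and $\OMA$ appearing in Theorem~\ref{thm:boundarylength} do not vary with $t$. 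Applying that theorem to $X_t$ and dividing through by $\ell(\partial X_t)=nt$ gives, for every small $t$,
\begin{equation*}
n=\frac1t\Sum_{\eta\in\OM}\log\!\big(\coth^2(\eta_t/2)\big)+\frac1t\Sum_{\alpha\in M}\Sum_{\eta\in\OMA}\log\!\left(\frac{a_t+\sqrt{a_t^2+t_\eta^2-1}}{\sqrt{a_t^2-1}+\sqrt{a_t^2+t_\eta^2-1}}\right),
\end{equation*}
where $a_t=\cosh(\alpha_t/2)$ and $t_\eta=\cosh(\eta_t/2)$ are the half-traces in $X_t$. I would then let $t\to0$ and argue that the first sum disappears in the limit while the second converges to the right-hand side of the theorem.

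\emph{The first sum vanishes.} In $X_t$ every boundary geodesic is short, so every orthogeodesic must cross the standard collar of a short boundary geodesic (of half-width $d_t$ with $\cosh d_t=2/t$, hence $e^{d_t}\sim 4/t$) at \emph{both} of its endpoints; thus $\eta_t=\eta_t^{\dt}+2d_t+o(1)$, where $\eta_t^{\dt}$ is the doubly truncated length of $\eta$ in $X_t$. Consequently $\log\coth^2(\eta_t/2)=\log\!\big(1+\sinh^{-2}(\eta_t/2)\big)=O(e^{-\eta_t})=O(t^2e^{-\eta_t^{\dt}})$ term by term; combined with the a priori bound $\sum_{\eta\in\OM}\log\coth^2(\eta_t/2)\le \ell(\partial X_t)=nt$ (all terms nonnegative), this forces the first sum, after division by $t$, to tend to $0$.

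\emph{The second sum.} Fixing $\alpha\in M$ and $\eta\in\OMA$, as $t\to0$ one has $a_t\to\cosh(\alpha/2)$ if $\alpha$ persists as a closed geodesic of $X$, and $a_t\to1$ if $\alpha$ is one of the pinched curves --- in which case $\alpha$ is a cusp of $X$ and we read $\alpha=0$, consistently with $e^{-\alpha/2}=1$ --- while in every case $t_\eta\to\infty$. Using $\cosh(\alpha/2)-\sinh(\alpha/2)=e^{-\alpha/2}$ and $1/t_\eta\sim 2e^{-\eta_t/2}$, the displayed $\alpha$-gap is $\log\!\big(1+e^{-\alpha/2}/(\sqrt{a_t^2-1}+\sqrt{a_t^2+t_\eta^2-1})\big)\sim e^{-\alpha/2}/t_\eta\sim 2e^{-\alpha/2}e^{-\eta_t/2}$. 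Converting full to doubly truncated length via the collar--cusp comparison, $e^{-\eta_t/2}\sim\tfrac t4 e^{-\eta_t^{\dt}/2}$, so each term is $\tfrac t2 e^{-\alpha/2}e^{-\eta_t^{\dt}/2}(1+o(1))$ with $\eta_t^{\dt}\to\tteta$. Summing over $\eta$ and $\alpha$ and dividing by $t$ produces, in the limit, $\tfrac12\sum_{\alpha\in M}e^{-\alpha/2}\sum_{\eta\in\OMA}e^{-\tteta/2}$, which is exactly the right-hand side of Theorem~\ref{thm:ncusps}.

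\emph{The main obstacle.} The hard part will be legitimising the interchange of $\lim_{t\to0}$ with the two infinite sums: the pointwise asymptotics above have to be made uniform over \emph{all} orthogeodesics, which requires controlling $\eta_t-\tteta$ uniformly in $\eta$ and exhibiting a $t$-independent summable majorant for the terms of the second sum (and for those of the first). I expect the majorant to come from Theorem~\ref{thm:boundarylength}, or Theorem~\ref{thm:abstract}, applied to $X_t$ at a single fixed small $t_0$ --- whose convergence already encodes the needed uniform smallness of tails --- together with Basmajian's convergence theorem for the first sum; Fatou's lemma supplies the inequality ``$\ge$'' immediately, and the matching upper bound, together with the vanishing of the rescaled first sum, is the real content. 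An alternative that avoids the limit altogether is to repeat the gap computation underlying Theorem~\ref{thm:boundarylength} with a length-$2$ horocycle around a cusp $\beta$ in place of a geodesic boundary $\beta$, showing directly that the Basmajian-type gaps have measure zero and that the gap of an orthogeodesic peripheral to $\alpha$ equals $e^{-\alpha/2}e^{-\tteta/2}$, and then to sum the cusp case of Theorem~\ref{thm:abstract} over the $n$ cusps; the computation reduces in either case to the elementary identities recorded above.
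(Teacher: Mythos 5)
Your local asymptotics are correct (the gap of an orthogeodesic peripheral to $\alpha$ is $\sim \tfrac t2 e^{-\alpha/2}e^{-\tteta/2}$ after pinching the boundary to length $t$, and the Basmajian-type gaps are $O(t^2)$ termwise), but the main route — pinching Theorem~\ref{thm:boundarylength} and exchanging $\lim_{t\to 0}$ with the two infinite sums — has a genuine unfilled gap, and you say so yourself: the $t$-independent summable majorant is never produced, and the sources you point to do not supply it. Comparing with the identity at a single fixed $t_0$ does not dominate the terms for $t<t_0$, since every $\eta_t$ and its truncation vary with $t$ and you have no uniform-in-$\eta$ comparison between the collar-truncated length in $X_t$ and $\tteta$ on $X$ (orthogeodesics wind arbitrarily deep into the degenerating collars, which is exactly where uniformity fails). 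Worse, the natural majorant, convergence of $\sum_{\eta\in\OMA}e^{-\tteta/2}$ on the cusped surface, is in the paper itself deduced from the cusp identity (the remark following Equation~\ref{eq:cusp1} derives it from Equation~\ref{eq:cusp0} with $M=\{\alpha\}$), so invoking it here is circular unless proved independently. Fatou only yields $n\geq$ RHS; the reverse inequality — ruling out escape of gap measure into the pinching collars after rescaling by $1/t$ — is precisely the missing content, and your ``first sum vanishes'' step has the same defect: termwise $O(te^{-\eta_t^{\dt}})$ together with the a priori bound $\leq n$ gives boundedness, not convergence to $0$, without a uniform summable bound on $\sum_\eta e^{-\eta_t^{\dt}}$.

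The paper avoids all of this because it never takes a limit of the identity. Theorem~\ref{thm:abstract} is proved directly when the boundary element is a cusp: the horocycle of length $2$ is partitioned into gaps, and the complement is shown to have measure zero (for all-cusp boundary this needs the ergodicity argument in the lemma of Section~\ref{sec:proof}). The only limit taken is pointwise, to evaluate a single gap: the gap is projected to the collar boundary (multiplication by $\coth(\beta/2)$) and $\beta\to 0$ gives $\psi_\dt(\eta)=e^{-(\alpha+\tteta)/2}$, while the Basmajian-type gaps are identically $0$ because every orthogeodesic ends in a cusp; summing Equation~\ref{eq:cusp2} over the $n$ cusps gives the theorem, with no interchange of limit and infinite sum anywhere. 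Your closing one-sentence alternative (redo the gap computation on a length-$2$ horocycle and sum the cusp case of Theorem~\ref{thm:abstract} over the cusps) is exactly this argument; developed, it completes the proof, whereas the pinching route would require genuinely new uniform estimates that the proposal does not contain.
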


When $M$ is the set of all primitive simple closed geodesics, the above expression is a reformulation of the original McShane identity \cite{McShane}. This will be explained in Section \ref{ss:relationships}.

Both quantifications of the identity (Theorems \ref{thm:boundarylength} and \ref{thm:ncusps}) are obtained as sums of a quantification of the abstract identity, which takes into account one boundary component at a time:

\begin{theorem}\label{thm:oneatatime}
If $\beta$ is a boundary simple closed geodesic of $X$, it satisfies:
\begin{eqnarray*}
\ell(\beta) = &
\Sum_{\eta \in \OBM} & \log \left( \coth^2 (\eta/2) \right)\\
 +& \Sum_{\alpha \in M} \,\, \Sum_{\eta \in \OBA}  &\log\left( \frac{\cosh(\alpha/2) + \sqrt{\cosh^2(\alpha/2) + \cosh^2(\eta/2)-1}}{\sinh(\alpha/2) + \sqrt{\cosh^2(\alpha/2) + \cosh^2(\eta/2)-1}} \right).
\end{eqnarray*}

If $\beta$ is a cusp:
\begin{equation*}
2 = 2 \Sum_{\eta \in \OBM} e^{-\teta} +  \Sum_{\alpha \in M} \,\, \left(\Sum_{\eta \in \OBA} e^{-\frac{\alpha+\tteta}{2}}\right)
\end{equation*}
where $\teta$ is the truncated length of $\eta$ and, for orthogeodesics that leave and return to $\beta$, $\tteta$ is the doubly truncated length. 
\end{theorem}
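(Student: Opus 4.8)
The plan is to take the abstract identity of Theorem~\ref{thm:abstract} as given and to make its two gap functions explicit by a hyperbolic‑trigonometry computation in the universal cover. Recall the decomposition behind Theorem~\ref{thm:abstract}: fix a lift $\tilde\beta\subset\Hyp^2$ of $\beta$ and shoot, from almost every point of $\tilde\beta$, the geodesic ray perpendicular to $\tilde\beta$ into the half‑plane it bounds; each such ray is accounted for by exactly one of two events — it lands on a lift of $\partial X$ while remaining unobstructed (a Basmajian‑type event, recorded by an orthogeodesic $\eta\in\OBM$ and contributing the gap $\phi(\eta)$), or it is separated from $\tilde\beta$ by a lift of a marked curve $\alpha\in M$ (a McShane‑type event, recorded by an orthogeodesic $\eta$ peripheral to $\alpha$ and contributing $\psi(\eta)$). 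When $\eta$ returns to $\beta$, this same $\eta$ indexes one unobstructed landing interval and, flanking it, the spiralling interval producing its $\psi$‑gap, which is why each unsupportive orthogeodesic contributes to both sums. In every case the gap is the length of the sub‑interval of $\tilde\beta$ swept by the footpoints of the rays assigned to that event, and $\ell(\beta)$ is the total length of $\tilde\beta$ modulo the deck group, so the identity reduces to computing these interval lengths. For a cusp $\beta$ one runs the same argument with $\tilde\beta$ replaced by a horocycle $\tilde H$ of the standard cusp neighbourhood, the interval lengths now measured along $\tilde H$ and the total normalised to $2$.

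First I would recover $\phi$. If a lift $\tilde\gamma$ of a boundary component lies at distance $\ell=\ell(\eta)$ from $\tilde\beta$, then its orthogonal projection onto $\tilde\beta$ is an interval of length $2\log\coth(\ell/2)=\log\bigl(\coth^2(\ell/2)\bigr)$; this is the classical Basmajian computation \cite{Basmajian} (normalise so that $\tilde\beta$ is the imaginary axis and $\tilde\gamma$ a circle centred on the real axis at hyperbolic distance $\ell$, and read off the endpoints). In half‑trace form, with $t=\cosh(\eta/2)$, this is $\log\bigl(t^2/(t^2-1)\bigr)$, the first factor in \eqref{eq:halftraceformulation}; hence $\phi(\eta)=\log\bigl(\coth^2(\eta/2)\bigr)$.

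The substance is the computation of $\psi(\eta)$. An orthogeodesic $\eta$ peripheral to $\alpha$ lifts to a segment perpendicular to $\tilde\beta$ landing on a second lift $\tilde\beta'=g\tilde\beta$ of $\beta$, where $g\in\pi_1(X)$ is the deck transformation whose axis $\tilde\alpha$ is the corresponding lift of $\alpha$ and whose translation length is $\ell(\alpha)$. The rays of the associated event have footpoints filling an interval of $\tilde\beta$ governed by the configuration $\bigl(\tilde\beta,\ \tilde\alpha,\ \eta\bigr)$: dropping the relevant perpendiculars produces a right‑angled hexagon (degenerating to a pentagon with one ideal vertex exactly when the extremal ray spirals onto $\tilde\alpha$), having the ortholength $\ell(\eta)$ and the half‑length $\ell(\alpha)/2$ among its data. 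Solving the standard right‑angled hexagon/pentagon relations for the footpoint interval, and writing $a=\cosh(\alpha/2)$, $t=\cosh(\eta/2)$, should yield exactly
\[
\psi(\eta)=\log\!\left(\frac{\cosh(\alpha/2)+\sqrt{\cosh^2(\alpha/2)+\cosh^2(\eta/2)-1}}{\sinh(\alpha/2)+\sqrt{\cosh^2(\alpha/2)+\cosh^2(\eta/2)-1}}\right),
\]
i.e. the logarithm of the second family of factors in \eqref{eq:halftraceformulation} (using $\sqrt{a^2-1}=\sinh(\alpha/2)$). Substituting $\phi$ and $\psi$ into Theorem~\ref{thm:abstract} gives the geodesic‑boundary case.

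For the cusp case, either repeat the two computations with the horocycle $\tilde H$ in place of $\tilde\beta$ — the footpoint intervals are now horocyclic arcs, and the controlling lengths are the truncated length $\teta$ (when $\eta$ leaves $\beta$ and ends elsewhere) and the doubly truncated length $\tteta$ (when $\eta$ leaves and returns to $\beta$) — or pass to the limit $\ell(\beta)\to\infty$ in the geodesic‑boundary identity after suitably normalising $\tilde H$, using the elementary asymptotics $2\log\coth(x/2)\sim 2e^{-x}$ and the corresponding expansion of $\psi$. Either way one obtains $\phi_\trun(\eta)=2e^{-\teta}$, $\psi_\dt(\eta)=e^{-\frac{\alpha+\tteta}{2}}$, and the constant $2$ on the left. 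I expect the main obstacle to be precisely the $\psi(\eta)$ computation: identifying which hexagon (or degenerate pentagon) governs a ray spiralling toward the axis of a possibly non‑simple $\alpha$, and extracting the footpoint interval with the correct signs and square‑root branches so that it matches \eqref{eq:halftraceformulation}. In the cusp reduction the analogous difficulty is justifying the interchange of the limit with the infinite sums and tracking the passage from $\ell(\eta)$ to $\teta$, respectively $\tteta$.
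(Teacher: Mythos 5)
Your overall route is the paper's own: take the abstract identity of Theorem~\ref{thm:abstract} for the fixed boundary component $\beta$ and substitute explicit gap values, with $\phi(\eta)=2\log\coth(\eta/2)$ from the Lambert-quadrilateral (Basmajian) computation and $\psi(\eta)$ from cutting and unfolding the immersed half-pants into a symmetric right-angled hexagon and applying pentagon relations. Your sketch of the geodesic-boundary case matches this and is fine.

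The genuine problem is in your cusp case, in the limiting alternative: a cusp arises as $\ell(\beta)\to 0$, not $\ell(\beta)\to\infty$; as $\ell(\beta)\to\infty$ nothing degenerates to a cusp and there is no source for the constant $2$ on the left. The correct procedure is to renormalize the gaps, not the horocycle: project the measure from $\beta$ to the boundary of its collar (multiplying each gap by $\coth(\beta/2)$), whose length $\beta\coth(\beta/2)\to 2$ — that is where the $2$ comes from — and relate the ortholength to the doubly truncated length via $\eta=\tteta+2\arcsinh\left(1/\sinh(\beta/2)\right)$; then $\coth(\beta/2)\,\phi(\eta)\to 0$ for orthogeodesics returning to the degenerating boundary, while $\coth(\beta/2)\,\psi(\eta)\to e^{-(\alpha+\tteta)/2}=\psi_\dt(\eta)$, and for orthogeodesics from the cusp to a geodesic boundary one gets $\phi_\trun(\eta)=2e^{-\teta}$. (Your asymptotic is also off by a factor of two: $2\log\coth(x/2)\sim 4e^{-x}$, not $2e^{-x}$; the constants only come out right after the $\coth(\beta/2)$ renormalization.) Moreover, phrased as "pass to the limit in the geodesic-boundary identity," your route requires justifying the interchange of the limit with the infinite sums, a difficulty you flag but do not resolve; the paper avoids it entirely because the abstract identity is proved directly at the cusp (decomposition of the horocycle, with an ergodicity argument for full measure), so only the individual gap functions need to be evaluated — either by the pointwise limit above or by a direct horocyclic computation. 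Your first alternative (redoing both computations against a horocycle) is viable and is the cleaner fix, but as written it is only an assertion, so at present the cusp half of the statement is not established.
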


{\bf Organization.}

The article is organized as follows. Section \ref{sec:prelim} contains a lot of the groundwork: after having introduced a bit of notation, coherent markings are properly introduced and studied, followed by a detailed computation of the gaps. In Section \ref{sec:proof}, the abstract identity is proved. In the final section, the quantified identities are formulated by inserting the previously computed gaps, and the relationship to existing identities is then discussed.

{\bf Acknowledgments.}

These results are the fruit of a fascination for the beautiful geometric identities mentioned above and enlightening discussions about them. A particular thanks to Greg McShane, Martin Bridgeman and Maryam Mirzakhani. Many ideas and a lot of inspiration also come from my recent collaboration with Ara Basmajian and Ser Peow Tan: thanks for keeping us sane and for making working on identities so much fun. Thank you Binbin Xu for comments on a first draft and Francis Lazarus for a discussion that made its way into Proposition \ref{prop:looproots}.

\section{Background, notation and preliminaries}\label{sec:prelim}

Let $\Sigma$ be a topological orientable surface with a non-empty set of marked points and negative Euler characteristic. Take $X$ to be a hyperbolic surface, homeomorphic to $\Sigma$ and with either cusp or geodesic boundary. The moduli space $\M(\Sigma)$ is the set of such $X$ up to isometry and $\mcg(\Sigma)$ is the (full) mapping class group of $\Sigma$, that is homeomorphisms up to isotopy. 

\subsection{Curves, arcs and geodesics}\label{ss:curves}

In a bit of somewhat non-standard notation, $\CC(\Sigma)$ will designate the set of {\it all} curves on $\Sigma$, by which we mean the set of free homotopy classes of oriented closed curves, including curves peripheral to boundary. Elements of $\CC(\Sigma)$ are not parametrized, but are oriented, and include all non-primitive elements. In particular, there are infinitely many homotopy classes peripheral to boundary. Because of our condition on $X$, via the unicity of geodesics in free homotopy classes, elements of $\CC(\Sigma)$ correspond to the set of all closed geodesics $\G(X)$, including those that are not primitive and boundary elements. There is a slight matter of convention here: if $X$ has cusp boundary, it might not make much sense to consider a cusp as a boundary geodesic, much less an oriented boundary geodesic or a power of one, but this slight discrepancy between $\CC(\Sigma)$ and $\G(X)$ as sets of free homotopy classes is not a fundamental issue and will be discussed further below. 

Similarly, $\A(\Sigma)$ will be the set of topological arcs on $\Sigma$ with endpoints on the marked points and up to homotopy preserving the endpoints. Note there is no notion of primitive for elements of $\A(\Sigma)$ (said otherwise, all elements of $\A(\Sigma)$ are primitive). Given $X$, the geometric realization of $\A(\Sigma)$ is the set $\OO(X)$ consisting of orthogeodesics of $X$: these are geodesic arcs with endpoints orthogonal to the boundary of $X$. Again, if $X$ has cusps, the term orthogeodesic might seem inappropriate for the infinite length geodesics travelling to a cusp, but in fact these geodesics are, in the a region sufficiently deep in the cusp, orthogonal to the boundary of any horocylic neighborhood of the cusp.

Before introducing markings, we begin by making some observations about $\CC(\Sigma)$ and $\A(\Sigma)$.

{\it Closed curves, arcs and their subloops}

Let $\gamma \in \CC(\Sigma)$ which we always think of being in minimal position, meaning that minimizes self-intersection number. For the most part, we will think of closed curves as being realized topologically in minimal position, and to avoid further confusion, in a way such that all intersection points are double. Hence in an intersection point, because curves are oriented, there are exactly two outgoing directions, each associated to an incoming direction. If $\gamma$ is primitive, meaning not the power of another curve, then its realization as a closed geodesic in $\G(X)$ (for any $X$) is in minimal position. There is a subtlety here when $\gamma$ is not primitive however: its unique closed geodesic representative will be some number of iterates of the associated primitive closed geodesic. While this geometric representative minimizes {\it transversal} self-intersection, of course there are infinitely many double points, but of course there are different ways of getting around this difficulty. In any event, we denote by $i(\alpha,\alpha)$ the minimal number of double points among topological representations (higher order multiple points are counted with multiplicities so that minimality is reached by certain representatives with double points). 

Associated to a curve in $\CC(\Sigma)$ are (oriented and proper) subloops. Given any curve with self-intersection points, in each intersection point (which we suppose double) there are exactly two (not necessarily) distinct subloops, obtained by taking one of the two outgoing paths until it returns. In any point that is not a self-intersection point, there is a unique loop beginning and ending given by the curve itself, and so there are no proper subloops in that point. Hence, any closed curve $\alpha$, has at most $2 i (\alpha, \alpha)$ proper subloops, each corresponding to (not necessarily distinct) free homotopy classes of oriented closed curves. Further observe that this set of elements of $\CC(\Sigma)$ associated to a given $\alpha$ does {\it not} depend on the choice of representation of $\alpha$. Indeed, given an intersection point of a curve, following the associated loops as the point moves under a free homotopy, it is easy to see that the two free homotopy classes do not change. Similarly, arcs can have proper subloops, corresponding to homotopy classes of closed curves. This brings us to the following:

\begin{definition} Let $\alpha$ and $\beta$ be curves in $\CC(\Sigma)$. We say that $\beta$ is supported by $\alpha$ if $\beta$ is freely homotopic to a proper subloop of $\alpha$ and we write $\beta \xhookrightarrow{} \alpha$.

Similarly, for $\eta \in \A(\Sigma)$, we say $\beta$ is supported by $\eta$ if it is freely homotopic to a proper subloop of $\eta$ and we write $\beta \xhookrightarrow{} \eta$. 
\end{definition}

We now make a few observations about curves, arcs and their proper subloops. 

\begin{proposition}\label{prop:looproots}
Let $\beta \xhookrightarrow{} \alpha$ and suppose $\beta$ is not prime, meaning $\beta = (\beta')^k$ for some natural number $k>1$. Then $\beta' \xhookrightarrow{} \alpha$. More generally $(\beta')^l \xhookrightarrow{} \alpha$ for all $l\leq k$. 
\end{proposition}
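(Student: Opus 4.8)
The plan is to work with a topological representative of $\alpha$ in minimal position and to analyze the proper subloop of $\alpha$ that witnesses $\beta \xhookrightarrow{} \alpha$. Fix a self-intersection point $p$ of $\alpha$ at which one of the two outgoing branches traces a loop $L$ freely homotopic to $\beta$. Since $\beta = (\beta')^k$ is a proper power, the closed curve $L$ is, up to free homotopy, the $k$-fold iterate of a primitive curve $\beta'$; the key geometric input is that an immersed loop freely homotopic to an iterate must itself ``wrap around'' $k$ times, so that $L$ contains additional self-intersections coming from the wrapping. Concretely, I would realize $L$ as a loop in a neighborhood of a closed geodesic (or a one-holed/annular neighborhood of the primitive class $\beta'$) and observe that $L$ visits a transversal to $\beta'$ exactly $k$ times; cutting $L$ at an intermediate return to that transversal produces a sub-sub-loop of $\alpha$ freely homotopic to $(\beta')^l$ for the appropriate $l<k$.

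More carefully, the main steps are: (1) reduce to the case $l = k-1$ and $l$ arbitrary follows by induction; (2) starting from the loop $L \simeq (\beta')^k$ based at the self-intersection point $p$ of $\alpha$, show that $L$ crosses itself and in particular that along $L$ there is a point $q$ which is also a self-intersection point of $\alpha$ (here one uses that $L$ cannot be embedded, since $k>1$ forces $i(L,L)>0$, and that $L \subset \alpha$ so a self-crossing of $L$ is a self-crossing of $\alpha$); (3) at $q$, identify the subloop $L'$ of $\alpha$ that runs from $q$ back to $q$ following $L$, and check it is freely homotopic to $(\beta')^{l}$ with $1 \le l \le k-1$ by a winding-number / intersection-number count against a transversal arc to $\beta'$; (4) conclude $(\beta')^l \xhookrightarrow{} \alpha$, and induct downwards, using at the base case the primitive $\beta'$ itself ($l=1$) and noting $l=k$ is the hypothesis $\beta \xhookrightarrow{}\alpha$.

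The step I expect to be the main obstacle is step (3): making precise, and representation-independent, the claim that the piece of $\alpha$ cut out at the auxiliary self-intersection point $q$ is homotopic to a specified power $(\beta')^l$ rather than to some other curve that merely happens to lie in a neighborhood of $\beta'$. The clean way to handle this is to pass to the cyclic cover (or the annular cover) of $X$ — or of $\Sigma$ — associated to the primitive class $\beta'$: there $L$ lifts to an arc whose endpoints differ by the $k$-th power of the deck transformation, any self-crossing of $L$ occurs between two lifts differing by $\beta'^j$ for some $0<j<k$, and the subloop $L'$ projects back to exactly $(\beta')^{j}$ or $(\beta')^{k-j}$ depending on orientation. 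Since the collection of subloop homotopy classes of $\alpha$ is independent of the chosen representative (as recorded in the discussion preceding the proposition), it suffices to exhibit one representative in which these powers appear, which the cyclic cover picture provides. The remaining bookkeeping — that the relevant $l$ ranges over all of $\{1,\dots,k\}$, obtained by iterating the cut-and-induct argument — is then routine.
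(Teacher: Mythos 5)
Your strategy --- locate a self-crossing of the subloop $L$ of $\alpha$ that is freely homotopic to $(\beta')^k$, and read off a smaller power of $\beta'$ from the loop it cuts off, using the cover attached to $\beta'$ to identify homotopy classes --- is close in spirit to the topological argument the paper only sketches as an aside; the paper's actual proof is geometric, via the collar of $\beta'$ (a geodesic loop freely homotopic to $(\beta')^k$ must have entered the collar, whose width depends only on $\beta'$, and in winding $k$ times around it forms loops homotopic to every $(\beta')^l$, $l\leq k$). The genuine gap in your write-up is step (3), and the cover picture as you describe it does not repair it. It is not true that an arbitrary self-crossing $q$ of $L$ (whose existence you correctly extract from $i(L,L)>0$, or from the fact that a loop injective on its interior is embedded and hence primitive) cuts off a subloop homotopic to a power of $\beta'$, nor that ``any self-crossing of $L$ occurs between two lifts differing by $\beta'^{\,j}$ with $0<j<k$''. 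If $\beta'$ is itself non-simple (which is allowed), $L$ has crossings inherited from the crossings of $\beta'$, and the subloop cut off at such a crossing is homotopic to a proper piece of $\beta'$, not to any power of it; in the annular cover these are crossings between the closed lift of $L$ and other components of the preimage of $L$, not crossings of the closed lift with a translate of itself. (Note also that the annular cover of $\Sigma$ associated to $\langle\beta'\rangle$ is not regular, so there is no deck transformation ``$\beta'$'' acting on it; the translation picture you invoke lives in the universal cover of that annulus, i.e.\ in $\mathbb{H}$ acted on by $\beta'$ along its axis.) The real content of the proposition is therefore the \emph{existence}, for each $l$, of a crossing whose subloop has winding number exactly $l$ around the core --- equivalently, that the lifted arc $\tilde L$ from $x$ to $\beta'^{\,k}x$ meets $\beta'^{\,l}\tilde L$ for every $0<l<k$ --- and your argument never establishes this; mere non-embeddedness of $L$ does not.

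Relatedly, the concluding bookkeeping is not routine. Extracting from a winding-$k$ subloop ``some'' $(\beta')^j$ with $j<k$ and iterating downwards only produces a shrinking set of exponents (together with the complementary $k-j$ at the same crossing), and this need not exhaust $\{1,\dots,k\}$: from $k=5$ you could a priori only ever see the exponents $1,2,3$ besides $5$. So to obtain $(\beta')^l\hookrightarrow\alpha$ for \emph{all} $l\leq k$, as the statement requires, you need the crossing-with-the-$l$-th-translate fact above for each $l$ separately (or the collar argument, which delivers all intermediate powers at once). On the positive side, your step (2) also quietly handles the basepoint caveat the paper flags: since $\alpha$ has only double points, the interior of $L$ avoids the basepoint $p$, so any interior self-crossing of $L$ is a genuine double point of $\alpha$ and the loop it cuts off is a proper subloop of $\alpha$.
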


\begin{proof}
As will be detailed in the sequel, for a given hyperbolic structure this is certainly true for any closed geodesic representative. This comes from the fact that a geodesic only loops $k$ times around a given geodesic $\beta'$ if it belongs to a small enough collar around $\beta'$. And the width of the collar depends only on the length of $\beta'$, and is decreasing. In particular, if you enter the collar for $\beta$, you've entered the collar for each $(\beta')^k$. See Remark \ref{rem:collar} for more details about the collar. 

Although we make no use of this in the sequel, it is interesting to point out that this is really a topological fact, and does not even depend on having $\beta$ in minimal position. To see this, the easiest is probably to look at $\beta$ in the annular cover corresponding to $\beta'$ (this is a finite cover where $\beta'$ has a lift freely homotopic to a simple primitive loop which generates the annulus). The curve $\beta$ has a lift $\tilde{\beta}$ that is freely homotopic to multiple of generator of the annulus. Now it is not too difficult to convince oneself that $\tilde{\beta}$ contains a subloop that is homotopic to the generator, which doesn't contain the basepoint (this last point is key, otherwise it won't project to the desired loop). 
\end{proof}

Also observe that $\alpha \in \CC(\Sigma)$ is simple (and primitive) if and only if it does not support {\it any} curves, and similarly for $\eta \in \A(\Sigma)$. However, if $\alpha$ is a power of a simple curve, then it supports all lesser powers. More generally, any curve that is a proper power supports all lesser powers.

More generally, one might hope that its a transitive property (if $\beta \xhookrightarrow{} \alpha$ and $\gamma \xhookrightarrow{} \beta$, then $\gamma \xhookrightarrow{} \alpha$) but this is not always the case, see Figure \ref{fig:nottransitive}.

\begin{figure}[h]
\leavevmode \SetLabels
\L(.31*.36) $\xhookrightarrow{}$\\%
\L(.65*.36) $\xhookrightarrow{}$\\%
\endSetLabels
\begin{center}
\AffixLabels{\centerline{\includegraphics[width=16cm]{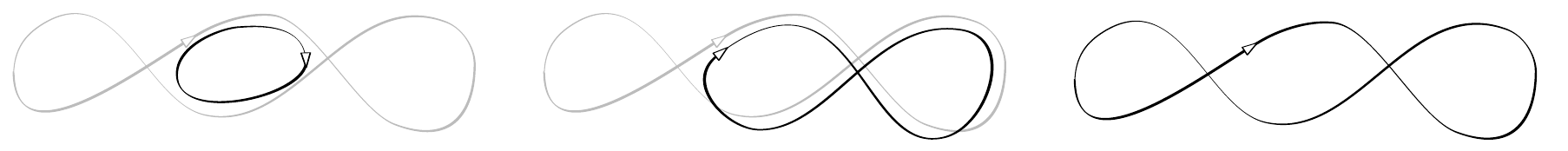}}}
\vspace{-24pt}
\end{center}
\caption{The left most curve is not a subloop of the right most curve}
\label{fig:nottransitive}
\end{figure}

We now pass to the geometric counterparts of these notions. 

{\it Closed geodesics and orthogeodesics}
 
Now consider $X\in \M(\Sigma)$. To any element of $\curves$ that is not peripheral to boundary, there is a unique geodesic representative. Likewise, any interior element of $\G(X)$ corresponds to a unique element of $\curves$. This is also true for boundary geodesics and their corresponding peripheral homotopy classes, but not quite to cusps. The cusp is the geometric realization of all homotopy classes peripheral to it. There is however a complete one to one correspondence between elements of $\arcs$ and those of $\OO(X)$. Via this correspondence, closed geodesics and orthogeodesics of $X$ inherit the notion of {\it support}.

{\bf Convention.} Throughout the article, geometric quantities, namely lengths realized on an underlying hyperbolic surface $X$, will be denoted by the same symbol as the corresponding homotopy classes in $\curves$ and $\arcs$. For example, with this convention the quantity $\cosh(\alpha/2)$ for $\alpha \in \curves$, can be thought of as a function of $X$ in $\M(\Sigma)$ which associates to $X$ the hyperbolic cosine of half of the length of the unique geodesic in the free homotopy class of $\alpha$.

Oriented geodesics on $X$ naturally correspond to subset of vectors in $T^{1}(X)$, the unit tangent bundle of $X$. To each $\gamma \in \G(X)$, we can associate a so-called stable neighborhood \cite{Basmajian2}, but here it is more relevant to associate another subset of $T^{1}(X)$ to $\gamma$.

To define it, introduce a bit of notation: given $v \in T^1(X)$, we let $\g_v$ be the complete oriented geodesic on $X$ obtained by exponentiating $v$ forwards and backwards, but only taking the forwards orientation (given by $v$). 

\begin{definition}
Let $\gamma \in \curves$. The loop set of $\gamma$, denoted $L_X(\gamma)$, is the subset of $T^1(X)$ consisting of all vectors belonging to oriented geodesics that form a loop freely homotopic to $\gamma$. Stated differently:
$$
L_X(\gamma)= \{ v\in T^1(X) \mid  \g_v \mbox{ contains a geodesic loop freely homotopic to $\gamma$}\}.
$$
\end{definition}

This is well-defined, even if $\gamma$ is a homotopy class peripheral to boundary, and even if this boundary is geometrically realized as a cusp. 

\begin{remark}\label{rem:collar}To $\gamma$ we can associated a (generally immersed) collar (or stable neighborhood) which is related to the loop set of $\gamma$. If you look at an annular lift of $\gamma$ (a finite lift where $\gamma$ is simple), by an adaptation of the classical collar lemma, see for instance \cite{BPT}, one obtains an embedded collar of half-width exactly 
$$
w(\gamma) = \arcsinh\left( \frac{1}{\sinh(\gamma/2)}\right)
$$
where to simply notation, $\gamma$ also denotes the length of the geodesic in $\G(X)$ corresponding to $\gamma$. The stable neighborhood is the projection of this collar to $X$. By construction, it includes all points of $X$ distance at most $w(\gamma)$ 
from $\gamma$. Now if a geodesic comes at least $w(\gamma)$ close to $\gamma$, it either crosses $\gamma$ immediately, by which it is meant that the geodesic follows $\gamma$ until it intersects $\gamma$ transversally, or it belongs to $L_X(\gamma)$. 
\end{remark}

\subsection{Coherent markings and peripheral orthogeodesics}

We can now introduce markings which are essential to the index sets of the identities. 

A {\it marking} of a set of curves is a choice of a subset $M$ of $\CC(\Sigma)$, which is allowed to be empty. (Equivalently, one could think of a marking as a map from $\CC(\Sigma)$ to $\Z_2$, where the image is $1$ for all marked curves, and $0$ for the others.)

We say that a marking $M$ is {\it coherent} if it satisfies the property that if $\alpha \in M$, then all curves supported by $\alpha$ are not. In other words, if $\alpha\in M$ and if $\beta \xhookrightarrow{} \alpha$, then $\beta \not\in M$. 

Here are some examples of coherent markings:
\begin{enumerate}
\item $M = \emptyset$. This example will give rise the index set of the Basmajian identity in the sequel. 
\item $M=\{\alpha\}$ where $\alpha\in \CC(\Sigma)$ is any choice of curve. Similarly, you can construct finite sets of curves by taking any finite set, and remove curves one by one that are supported by other curves. Note that there may be more than one way to do this. 

\item\label{ex:simple}$M=\{\alpha \mid \alpha\in \CC(\Sigma)\mbox{ is simple and primitive.}\}$. Note that this includes curves peripheral to boundary, and if $\alpha \in M$, then $\alpha^{-1} \in M$. This example will give rise the index set of McShane type identities as will be explained in Section \ref{sec:exampleidentities}. 

\item\label{ex:orbit}$M = \{ \phi(\alpha) \mid \phi \in \mcg(\Sigma)\}$ for a choice of $\alpha \in \CC(\Sigma)$. These are the curves obtained in the mapping class group orbit of a curve. For certain choices of $\alpha$, it might happen that $\phi(\alpha) = \phi'(\alpha)$ for different $\phi, \phi' \in \mcg(\Sigma)$, but in this case, the curve is not counted with multiplicity. An example where this {\it doesn't} happen is when $\alpha$ is a filling curve (its complementary region is a collection of topological disks with at most one marked point). 
\end{enumerate}

Here are some immediate properties of coherent markings. 

\begin{property} Coherent markings satisfy the following:
\begin{itemize}
\item Any coherent marking can be obtained by taking curves in succession $\{\gamma_1,\gamma_2, \hdots \}$ and verifying that $\gamma_k$ is not supported by $\{\gamma_1,\hdots,\gamma_{k-1}\}$.
\item Any subset $M' \subset M$ of a coherent marking $M$ is a coherent marking.
\item There are uncountably many coherent markings. 

\item If $\alpha\in M$ then for all integers $k>1$, $\alpha^k \not\in M$. 
\end{itemize}
\end{property}

\begin{proof}
The first property follows from the fact that the set of elements of $\CC(\Sigma)$ is countable.

The second property is immediate, because if a subset of $M$ violates coherency, then so does $M$.

For the third property, it suffices to find a marking of infinite cardinality, and then to consider all subsets of this marking. The examples \ref{ex:simple} and \ref{ex:orbit} above provide such markings.

The last property follows from the definition of a coherent marking and that roots of a non-primitive curve are subloops. 
\end{proof}

Given a coherent marking $M$, $\A(\Sigma)$ naturally splits into two sets: those that support at least one element of $M$, and those that don't. An arc $\eta \in \A(\Sigma)$ is said to be {\it unsupportive} if it does not support any element of $M$. 

Certain arcs have the same starting and ending points. For an associated orthogeodesic, these are those that leave and return to the same boundary curve. Naturally associated to an arc $\eta$, whose endpoints are both the same marked point $p$, are two elements of $\curves$. These the two curves obtained by looking at $\eta$ on $\Sigma \setminus \{p\}$ as a closed curve, and pushing it slightly away from $p$ (see Figure \ref{fig:push}). 

\begin{figure}[h]
\leavevmode \SetLabels
\L(.567*.74) $\eta$\\%
\L(.45*.8) $p$\\%
\endSetLabels
\begin{center}
\AffixLabels{\centerline{\includegraphics[width=12cm]{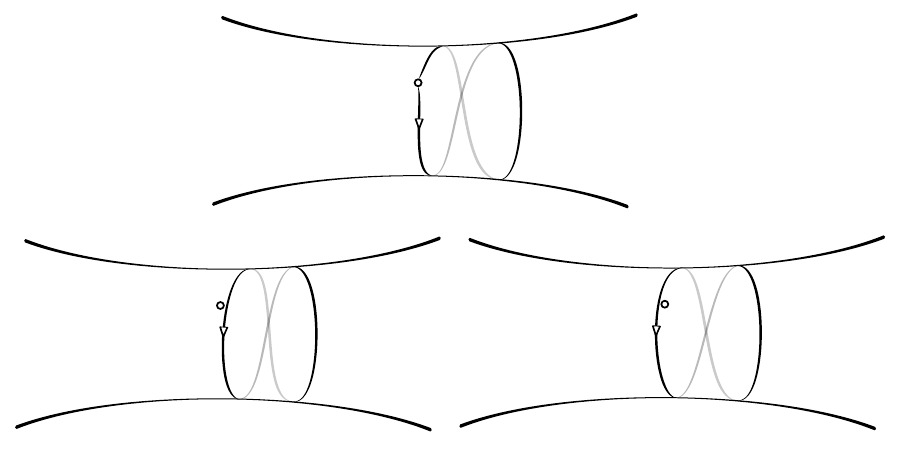}}}
\vspace{-24pt}
\end{center}
\caption{The two curves obtained by pushing $\eta$ slightly away from $p$ }
\label{fig:push}
\end{figure}

Often these two curves are distinct, but there is one case where they are not. Take $\Sigma$ to be a torus with a single marked point, and take $\eta$ to be a simple arc. Then the two (oriented) curves are freely homotopic (see Figure \ref{fig:oneholedtorus}).

\begin{figure}[H]
\leavevmode \SetLabels
\endSetLabels
\begin{center}
\AffixLabels{\centerline{\includegraphics[width=5cm]{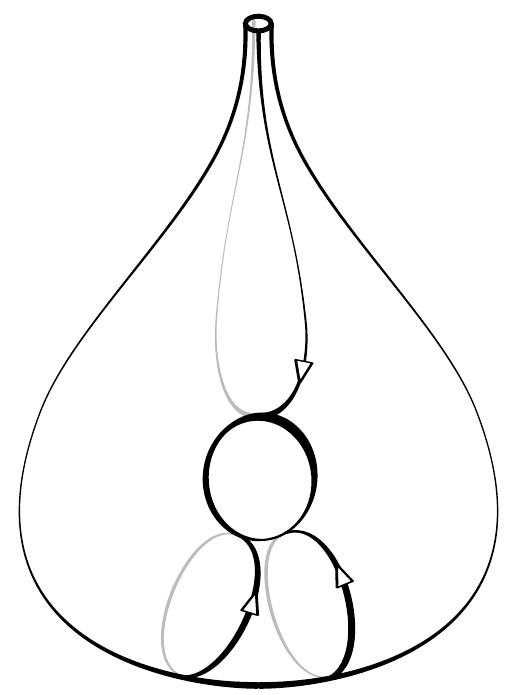}}}
\vspace{-24pt}
\end{center}
\caption{An $\eta$ with identical curves in $\partial \eta$}
\label{fig:oneholedtorus}
\end{figure}

Given an arc $\eta \in \arcs$, we denote the set of these two (not necessarily distinct) curves $\partial \eta$,  and if $\alpha \in \partial \eta$, we say that $\eta$ is {\it peripheral} to $\alpha$.

With this in hand, for a given coherent marking $M$, we see that certain unsupportive arcs are special: we say that an unsupportive arc $\eta$ is {\it peripheral} to $M$ if there exists $\alpha \in M$ such that $\alpha \in \partial \eta$. If an arc $\eta$ is peripheral to $\alpha$, note that via the homotopy on $\Sigma\cup \{p\}$, $\eta$ and $\alpha$ bound a (possibly) immersed cylinder $H$ on $\Sigma$, with $p$ lying on one of its boundaries. Similarly, $p$ and the two curves in $\partial \eta$ form the boundary of an immersed thrice punctured sphere (a pair of pants). The cylinder $H$ we call an immersed {\it half-pants}. See Figure \ref{fig:halfpants} for an example. 

\begin{figure}[H]
\leavevmode \SetLabels
\L(.605*.78) $\eta$\\%
\L(.462*.78) $\eta$\\%
\L(.645*.24) $\alpha$\\%
\endSetLabels
\begin{center}
\AffixLabels{\centerline{\includegraphics[width=4.5cm]{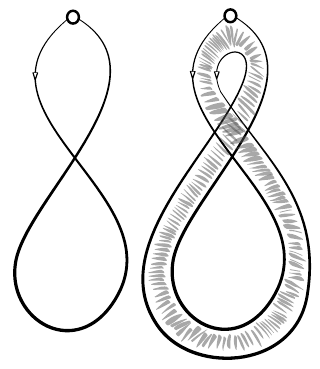}}}
\vspace{-24pt}
\end{center}
\caption{An arc $\eta$ and an associated half-pants}
\label{fig:halfpants}
\end{figure}

\subsection{Gaps and their inclusion}

Associated to an orthogeodesic are two types of gaps. One of them is the gap that appears in the Basmajian identity, and the other is a generalization of the type of gap that appears in McShane type identities.

{\it Type 1: Basmajian type gaps}

For completion we describe, and compute, the gap associated to an orthogeodesic following \cite{Basmajian}. 

Let $\eta$ be an orthogeodesic between boundary elements $\beta$ and $\beta'$. We begin with the case when $\beta$ and $\beta'$ are both simple closed geodesics. If we take a lift of $\eta$ in the universal cover $\Hyp$, it lies between lifts of $\beta$ and $\beta'$, say $\tilde{\beta}$ and $\tilde{\beta}'$. If you take the shortest point projection to $\tilde{\beta}$ of $\tilde{\beta}'$, you obtain a segment $b$ on $\tilde{\beta}$ of length that only depends on $\eta$. The shortest point projection geodesics, together with the segment  $b$ and $\tilde{\beta}'$, form a quadrilateral in $\Hyp$ with two right angles and two ideal points, as portrayed in Figure \ref{fig:basman}. By splitting the quadrilateral in two along the lift of $\eta$, we get a Lambert quadrilateral (called a trirectangle in \cite{BuserBook}), which satisfies
$$
\sinh(b/2) \sinh(\eta) =1.
$$
From this:
$$
b = 2 \arcsinh\left(\frac{1}{\sinh(\eta)}\right) =2 \log \left(\coth\left(\frac{\eta}{2}\right) \right).
$$

\begin{figure}[h]
\leavevmode \SetLabels
\L(.303*.66) $\frac{\phi(\eta)}{2}$\\%
\L(.43*.5) $\eta$\\%
\endSetLabels
\begin{center}
\AffixLabels{\centerline{\includegraphics[width=6cm]{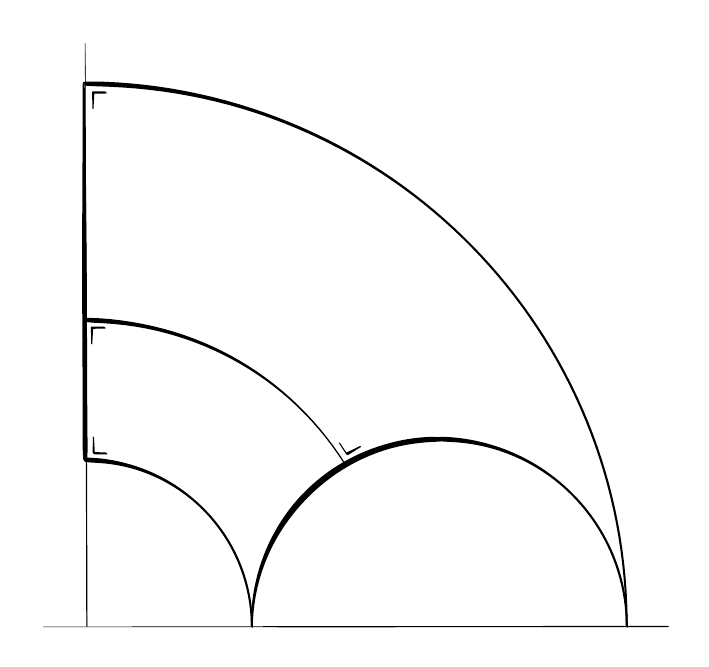}}}
\vspace{-24pt}
\end{center}
\caption{How to compute the Basmajian gap in $\Hyp$}
\label{fig:basman}
\end{figure}

Now $b$ projects to $X$ to a segment along $\beta$ of the same length, say $\phi(\eta)$. 
\begin{remark}
If you double the surface $X$ along its boundary, the two copies of $\eta$ become a closed geodesic of length $2\eta$. On this doubled surface, this closed geodesic has a collar, as observed in Remark \ref{rem:collar}, which is exactly of width $\phi(\eta)/2$. This is not, of course, a complete coincidence.
\end{remark}

If the orthogeodesic goes from a cusp and a closed geodesic, the closed geodesic can be projected onto a horocyclic boundary of the cusp. The gap this time will depend on the truncated length, by which we mean the length of the orthogeodesic that leaves from the cusp neighborhood and arrives on the geodesic. Note that the orthogeodesic is allowed to pass through the cusp neighborhood (as many times as it likes) and that length is still accounted for in the truncated length. In other words, it is only the initial infinite length segment from the cusp to the boundary of the neighborhood that is truncated.

\begin{figure}[h]
\leavevmode \SetLabels
\L(.415*.70) $\frac{\phi(\eta)}{2}$\\%
\L(.474*.47) $\teta$\\%
\endSetLabels
\begin{center}
\AffixLabels{\centerline{\includegraphics[width=6cm]{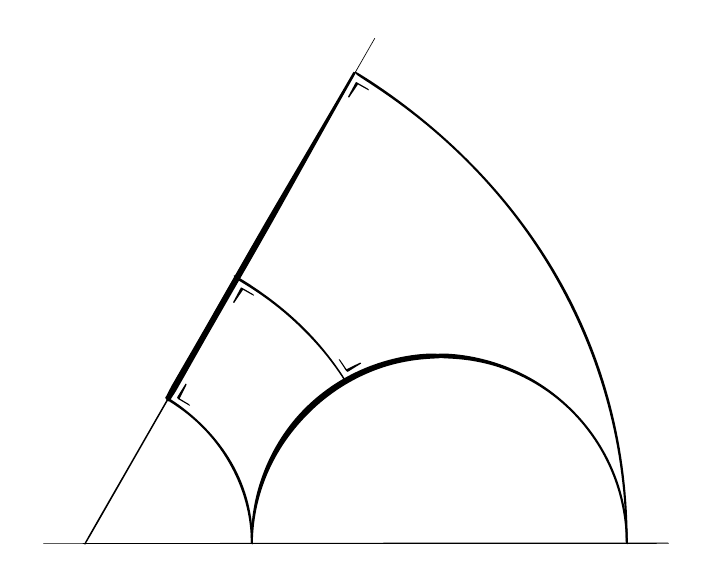}}}
\vspace{-24pt}
\end{center}
\caption{Computing the truncated gap in $\Hyp$}
\label{fig:truncatedbasman}
\end{figure}

If we denote by $\teta$ its truncated length, we obtain a gap of size
$$
\phi(\eta)= \frac{2}{e^{\teta}}
$$
associated to the orthogeodesic $\eta$. One way to compute this is by seeing this gap as a limit of the gap along a collar of $\beta$ as the length of $\beta$ goes to $0$. Alternatively, one can do a direct computation using the geometry portrayed in Figure \ref{fig:truncatedbasman}.

Note that if we have an orthogeodesic ending in a cusp, we do {\it not} consider a type of doubly truncated orthogeodesic and the gap associated to this orthogeodesic is always $0$. As we will see now, for orthogeodesics that return to the same cusp, the second type of gap {\it will} use the doubly truncated length. 

{\it Type 2: McShane type gaps}

A slight word of warning: although these are similar to the gaps described in \cite{Mirzakhani} and \cite{Tan-Wong-Zhang}, they are fundamentally different in several ways. First of all, they are associated to an orthogeodesic, and not an embedded pair of pants. (Of course an embedded pair of pants is essentially equivalent to a simple {\it unoriented} orthogeodesic.) Here the gaps involve {\it any} oriented orthogeodesic, not necessarily simple and they are related to the half-pants alluded to earlier. We also point out that these only concern orthogeodesics that leave and return to a same boundary element. 

Although most of the discussion is mostly topological, it is sometime convenient to rely on the underlying geometry. In what follows, $\beta$ is implicitly a simple closed geodesic. The case where $\beta$ is a cusp is of course topologically identical, and can be done geometrically in a near identical fashion by replacing $\beta$ by a horocyclic neighborhood around $\beta$. 

As mentioned before, associated to such an orthogeodesic $\eta$, are two curves, the collection of which is denoted $\partial \eta$. These both are homotopic to the concatenation of $\eta$ with an arc of $\beta$ between the endpoints of $\eta$ (see Figure \ref{fig:halfpants1}).

\begin{figure}[h]
\leavevmode \SetLabels
\L(.495*.62) $\eta$\\%
\L(.49*.98) $\beta$\\%
\endSetLabels
\begin{center}
\AffixLabels{\centerline{\includegraphics[width=5cm]{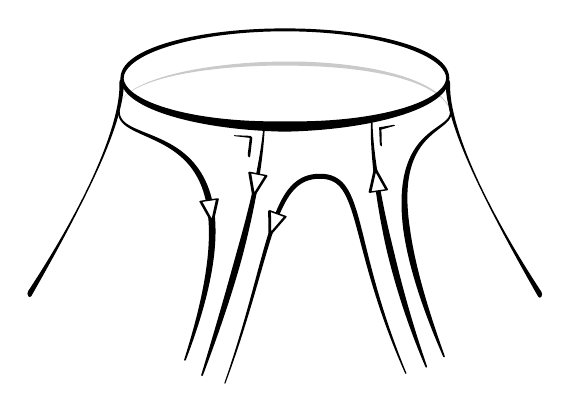}}}
\vspace{-24pt}
\end{center}
\caption{An orthogeodesic $\eta$ and the curves $\partial \eta$ near $\beta$}
\label{fig:halfpants1}
\end{figure}

Take one of the geodesic realizations of the curve, say $\alpha$, and observe that it bounds an immersed geodesic half-pants $H$, bounded on one end by $\alpha$, and on the other by $\eta$ and the segment $b$ of $\beta$ such that $b*\eta$ is freely homotopic to $\alpha$. The half-pants is embedded if and only if $\eta$ is simple.

Another way of seeing this is purely topological: if $\eta$ (as an arc) is peripheral to $\alpha$, then it is homotopic to a concatenation $c*\alpha*c^{-1}$ where $c$ is a path from $\beta$ to $\alpha$ (see Figure \ref{fig:peripheral}). 

\begin{figure}[H]
\leavevmode \SetLabels
\L(.303*.3) $\alpha$\\%
\L(.56*.57) $\eta$\\%
\L(.367*.62) $c$\\%
\endSetLabels
\begin{center}
\AffixLabels{\centerline{\includegraphics[width=7cm]{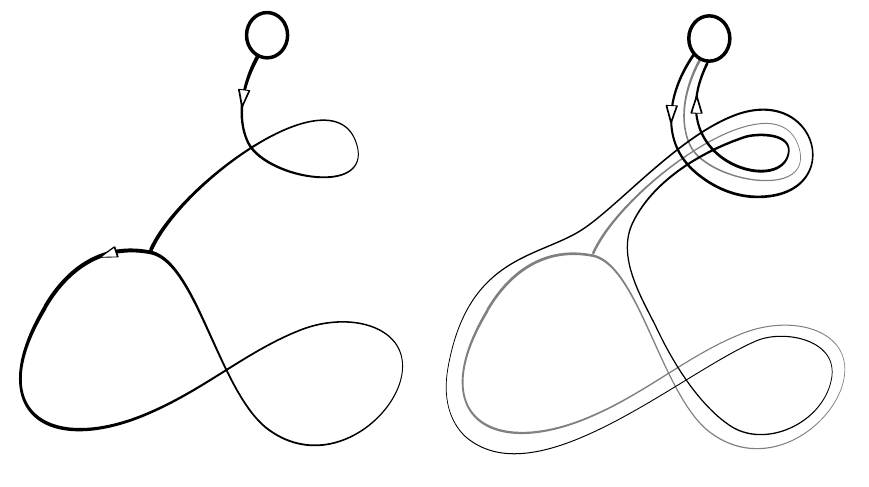}}}
\vspace{-24pt}
\end{center}
\caption{An arc $\eta$ peripheral to $\alpha$}
\label{fig:peripheral}
\end{figure}

The pre-immersed version of the cylinder is obtained by lifting $c$ and $\alpha$ to simple curves $\tilde{c}$ and $\tilde{\alpha}$. From this we obtain a lift of $\eta$ to $\tilde{\eta}$ (see Figure \ref{fig:peripherallift}).

\begin{figure}[H]
\leavevmode \SetLabels
\L(.273*.2) $\tilde{\alpha}$\\%
\L(.28*.4) $\tilde{\eta}$\\%
\L(.36*.44) $\tilde{c}$\\%
\L(.542*.59) $\eta$\\%
\L(.535*.4) $\alpha$\\%
\L(.607*.58) $c$\\%
\endSetLabels
\begin{center}
\AffixLabels{\centerline{\includegraphics[width=8cm]{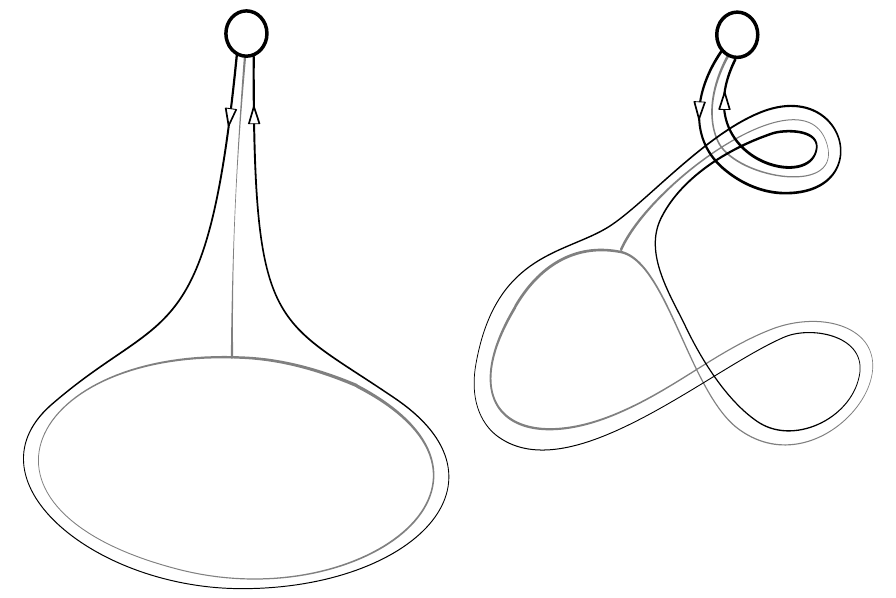}}}
\vspace{-24pt}
\end{center}
\caption{Lifting $\eta$, $c$ and $\alpha$ to $\tilde{H}$ (on the left)}
\label{fig:peripherallift}
\end{figure}

The half-pants $H$ are exactly the image via the immersion of the half-pants $\tilde{H}$ with boundary curves $\tilde{\alpha}$ on one side, and a piecewise geodesic boundary made of $\tilde{\eta}$ and a segment of $\tilde{\beta}$, the lift of $\beta$ (see Figure \ref{fig:halfpants2}). 

\begin{figure}[H]
\leavevmode \SetLabels
\L(.62*.19) $\tilde{\alpha}$\\%
\L(.48*.72) $\tilde{\eta}$\\%
\L(.36*.62) $\tilde{\beta}$\\%
\endSetLabels
\begin{center}
\AffixLabels{\centerline{\includegraphics[width=4cm]{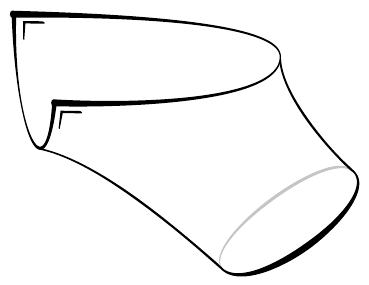}}}
\vspace{-24pt}
\end{center}
\caption{A lift of the geodesic half-pants}
\label{fig:halfpants2}
\end{figure}

The lifts of the immersed half-pants preserve the lengths of curves, and so it suffices to compute in $\tilde{H}$. For this we consider the simple geodesic orthoray $r$ leaving from $\tilde{\beta}$ and wrapping infinitely many times around $\tilde{\alpha}$ (in the same direction as $\tilde{\alpha}$). For those familiar with the gaps in McShane identities, this allows us to picture a gap  in between $\tilde{\eta}$ and $r$ (see Figure \ref{fig:halfgap}).

\begin{figure}[H]
\leavevmode \SetLabels
\L(.61*.19) $\tilde{\eta}$\\%
\L(.48*.82) $\tilde{\beta}$\\%
\L(.575*.62) $\tilde{r}$\\%
\endSetLabels
\begin{center}
\AffixLabels{\centerline{\includegraphics[width=6cm]{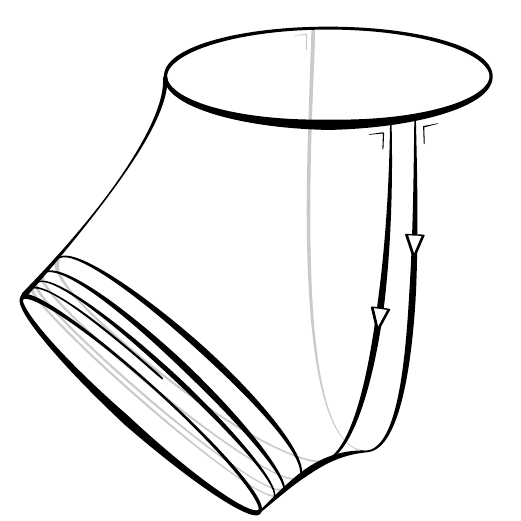}}}
\vspace{-24pt}
\end{center}
\caption{The distance between the basepoints of $\tilde{r}$ and $\tilde{\eta}$ is the sum of $\psi(\eta)$ and $\phi(\eta)/2$}
\label{fig:halfgap}
\end{figure}

The gap we consider here is {\it not} this whole gap however. From this gap, we remove the part of the gap coming from the Basmajian gap associated to $\eta$ (or $\tilde{\eta}$) to obtain a gap $\psi(\eta)$. This is best illustrated in the universal cover (see Figure \ref{fig:universalgap} where order not to introduce too much notation, the lifts of $\eta$, $\alpha$, $\beta$ and $r$ are also denoted $\tilde{\eta}$, $\tilde{\alpha}$, $\tilde{\beta}$ and $\tilde{r}$). The gap $\psi(\eta)$ associated to $\eta$ is the length of the segment as indicated.

\begin{figure}[h]
\leavevmode \SetLabels
\L(.43*.6) $\tilde{\eta}$\\%
\L(.385*.12) $\tilde{\alpha}$\\%
\L(.308*.945) $\tilde{\beta}$\\%
\L(.37*.24) $\tilde{r}$\\%
\L(.26*.34) $\psi(\eta)$\\%
\endSetLabels
\begin{center}
\AffixLabels{\centerline{\includegraphics[width=8cm]{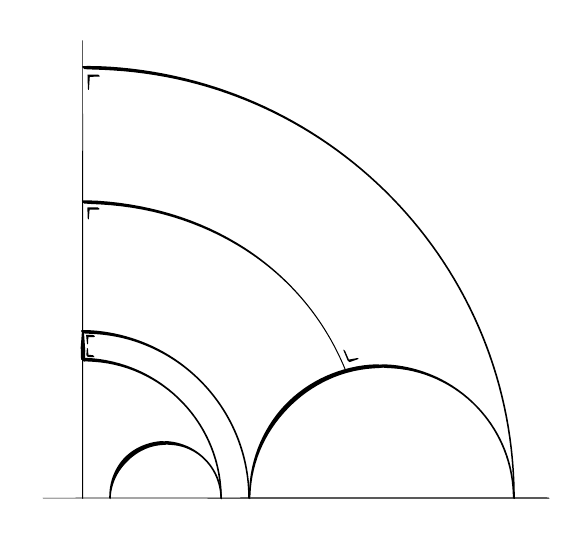}}}
\vspace{-24pt}
\end{center}
\caption{The gap $\psi(\eta)$ viewed in $\Hyp$}
\label{fig:universalgap}
\end{figure}

There is a dynamic interpretation of the gap: the gap is the set of (basepoints of) orthorays that lie entirely in $H$ until they make a loop around $\alpha$. The associated base vectors in $T^1(X)$ all belong to $L(\alpha)$. They might belong to other $L(\alpha')$ for $\alpha'\in M$, but they will form a loop around $\alpha$ before forming a loop around any other curve in $M$. This will be discussed in detail in the proofs of the identities.

The gap is bounded by two extremal cases: an orthoray that stays inside $H$ and wraps infinitely many times around $\alpha$, and an orthoray that corresponds to, on the completion of the surface, an ideal geodesic loop that is homotopic to $\alpha$. Note that, if $H$ is immersed and not embedded, there might be many (possibly infinitely many) such orthorays. The extremal ones we are interested in are the projections of the simple ones with this behavior in $\tilde{H}$. 

We can now compute $\psi(\eta)$. Up until now, the discussion has been essentially topological, and there are really two geometric situations to consider: when $\beta$ is a closed geodesic or when $\beta$ is a cusp. We begin with the first case, and the do the computation in the second case via a limiting argument. 

{\it Computing the gap when $\beta$ is a closed geodesic}

By cutting and unfolding the half-pants, we get a symmetric right-angled hexagon as in Figure \ref{fig:unfold}. By putting it in $\Hyp$ and unwrapping $r$, we can compute $\psi(\eta)$. 

\begin{figure}[H]
\leavevmode \SetLabels
\L(.63*.14) $\tilde{\alpha}$\\%
\L(.63*.805) $\tilde{\eta}$\\%
\L(.457*.3) $h$\\%
\endSetLabels
\begin{center}
\AffixLabels{\centerline{\includegraphics[width=12cm]{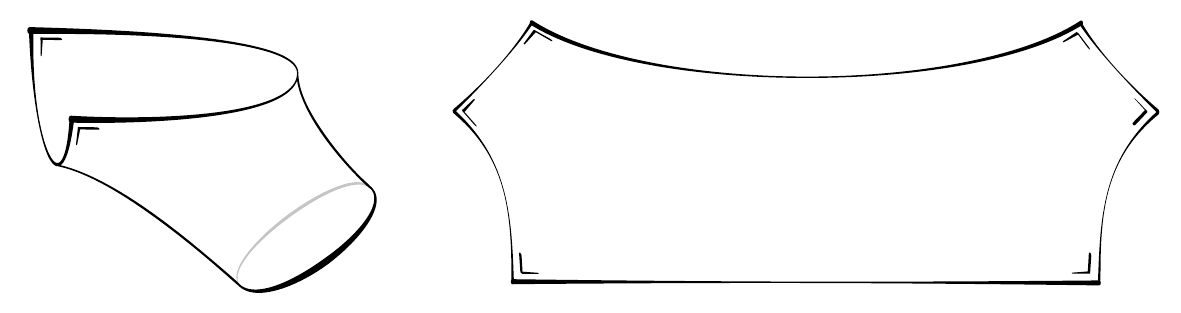}}}
\vspace{-24pt}
\end{center}
\caption{Cutting and unfolding the (lifted) half-pants}
\label{fig:unfold}
\end{figure}

In the following formulas, we use $\alpha$ for the length of $\alpha$ (which is the same as the length of $\tilde{\eta}$, and similarly for $\eta$. We also abbreviate $\psi(\eta)$ to $\psi$ and the Basmajian measure $\phi(\eta)$ to $\phi$. We denote by $x$ the quantity $\psi + \phi/2$ which is the length of the segment in between the base point of $\eta$ and the base point of $r$. And we denote by $y$ the complementary region to $x$ on the side of the hexagon on which $x$ lies. 
The quantity $h$ is one of the sides of the hexagon as indicated on Figures \ref{fig:unfold} and \ref{fig:pent}. 

$$
\sinh(h) \sinh(\psi) = 1.
$$

\begin{figure}[h]
\leavevmode \SetLabels
\L(.175*.883) $x$\\%
\L(.147*.75) $y$\\%
\L(.3*.845) $\frac{\eta}{2}$\\%
\L(.3*.12) $\frac{\alpha}{2}$\\%
\L(.19*.3) $h$\\%
\L(.52*.3) $\tilde{r}$\\%
\endSetLabels
\begin{center}
\AffixLabels{\centerline{\includegraphics[width=12cm]{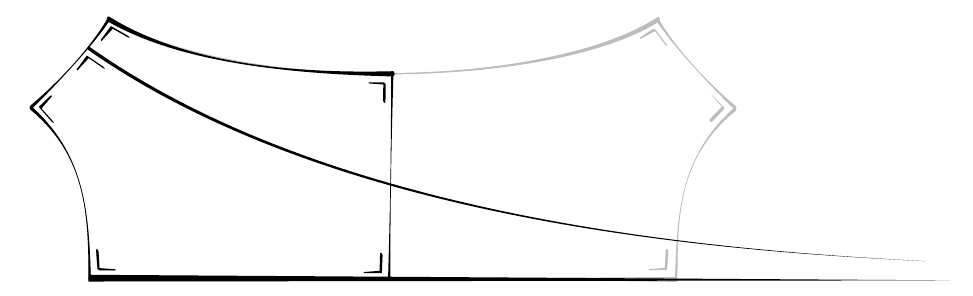}}}
\vspace{-24pt}
\end{center}
\caption{The pre-immersed pentagon}
\label{fig:pent}
\end{figure}

Now by looking in the right-angled pentagon that forms the left side of the hexagon (see Figure \ref{fig:pent}) and using the pentagon relation, we get
$$
\sinh(x + y) \sinh\left( \eta / 2 \right) = \cosh(\alpha / 2). 
$$
Putting this all together by elementary computations:
$$
x = \arcsinh\left(\frac{\cosh(\alpha/2)}{\sinh(\eta/2)}\right)- y = \arcsinh\left(\frac{\cosh(\alpha/2)}{\sinh(\eta/2)}\right) - \arcsinh\left(\frac{1}{\sinh(h)} \right).
$$
Now using the formula pentagon formula, $\sinh(h) = \cosh(\eta/2) / \sinh(\alpha/2)$ and so 
\begin{eqnarray*}
x &= &\arcsinh\left(\frac{\cosh(\alpha/2)}{\sinh(\eta/2)}\right) - \arcsinh\left(\frac{\sinh(\alpha/2)}{\cosh(\eta/2)} \right)\\
&=& \log\left( \coth\left(\frac{\eta}{2} \right)\right) + \log\left(\frac{\cosh(\alpha/2) + \sqrt{\cosh^2(\alpha/2) +\cosh^2(\eta/2) -1 }}{ \sinh(\alpha/2)+ \sqrt{\cosh^2(\alpha/2) +\cosh^2(\eta/2) -1 }} \right)\\
&=&\frac{ \phi}{2}+ \log\left(\frac{\cosh(\alpha/2) + \sqrt{\cosh^2(\alpha/2) +\cosh^2(\eta/2) -1 }}{ \sinh(\alpha/2)+ \sqrt{\cosh^2(\alpha/2) +\cosh^2(\eta/2) -1 }} \right)
\end{eqnarray*}
and because $\psi = x - \phi/2$, we get 
$$
\psi(\eta) = \log\left(\frac{\cosh(\alpha/2) + \sqrt{\cosh^2(\alpha/2) +\cosh^2(\eta/2) -1 }}{ \sinh(\alpha/2)+ \sqrt{\cosh^2(\alpha/2) +\cosh^2(\eta/2) -1 }} \right).
$$

{\it Half-trace notation}

It is sometimes convenient to use {\it traces} or {\it half-traces} of lengths. These are the traces and half-traces (or absolute value of) the corresponding matrices in $\PSL_2(\R)$. Note that to the gaps themselves, there is no corresponding surface group element, but there is a trace corresponding to the corresponding translation element in $\PSL_2(\R)$.

We denote the half-traces by
$$
a:= \cosh(\alpha/2), \, b:=\cosh(\beta/2)\mbox{ and }t:= \cosh(\eta/2)
$$
and then we can express $\psi(\eta)$ and $\phi(\eta)$ as follows:
\begin{eqnarray}
\phi(\eta) &=& \log\left(  \frac{t^2}{t^2-1} \right) \label{eq:halftrace1}\\
\psi(\eta) &=&  \log\left(  \frac{a + \sqrt{a^2-1 + t^2}}{\sqrt{a^2-1} + \sqrt{a^2-1 + t^2}} \right)\label{eq:halftrace2}\\
\beta &=& \log(b+ \sqrt{b^2-1}) \label{eq:halftrace3}
\end{eqnarray}

{\it Limiting to a cusp}

To compute the gap in the cusp case (this only concerns gaps of type 2), we use a standard limiting trick which involves multiplying the measure by a factor that varies in terms of the length of $\beta$. The resulting measure has the additional advantage of varying continuously from the boundary geodesic case to the cusp case. 

To do this, instead of viewing the measure on the boundary geodesic, we project it to the boundary of the collar of $\beta$, where the collar has been described previously in Remark \ref{rem:collar}. The collar boundary is not geodesic of course, but is a curve of constant curvature. By a standard computation, the boundary length of the collar of $\beta$ is $\beta \coth(\beta/2)$. The collar limits (continuously) to the standard horocyclic neighborhood of a cusp of boundary length $2$ as $\beta$ goes to $0$. 

A segment of length $\ell$ on $\beta$ projects to segment of length $\ell \coth(\beta/2)$ on the collar boundary. We get a new projected gaps given by $\coth(\beta/2) \psi(\eta)$ and $\coth(\beta/2) \phi(\eta)$, but the latter limits to $0$ as $\beta$ goes to $0$. The former however, even though $\psi(\eta)$ tends to $0$ as $\beta$ goes to $0$, doesn't disappear in the limit, and in fact, turns into a very nice expression. 

Before computing the limit, we need to know what we are computing the limit of. The length of the actual orthogeodesic $\eta$ goes to $\infty$ as $\beta$ goes to $0$, so we consider the twice truncated orthogeodesic $\tteta$: this is the orthogeodesic to and from the horocyclic neighborhood of the cusp where the initial and final geodesic segment which goes respectively from and to the cusp has been removed. This truncated orthogeodesic is the limit of the truncated orthogeodesic to and from the boundary of the collar of $\beta$ as the length of $\beta$ goes to $0$ (which we also denote $\tteta$). Note that, as described previously, the doubly truncated orthogeodesic may pass through the collar again, and in fact only does so if it winds around $\beta$, but we won't make use of that fact here. 

So we have 
$$
\eta = \tteta + 2 \arcsinh\left( \frac{1}{\sinh(\beta/2)} \right)
$$
and thus by a standard manipulation and limit argument:
$$
\lim_{\beta \to 0}\left( e^{\tteta/2} \frac{\cosh(\eta/2)}{\sinh(\beta/2)}\right)=1.
$$
With this in hand, and using the fact that 
$$
\lim_{\beta \to 0} \left(\coth(\beta/2) \psi(\eta) \right)= \lim_{\beta \to 0} \left(\coth(\beta/2) \sinh(\psi(\eta)) \right),
$$
one has
\begin{eqnarray*}
\lim_{\beta \to 0} \left(\coth(\beta/2) \psi(\eta)  \right)&=& \lim_{\beta \to 0}\left( \coth(\beta/2) \frac{e^{-\alpha/2}}{\cosh(\eta/2)} \right)\\
&=& \lim_{\beta \to 0} \left(\frac{e^{-\alpha/2}}{\sinh(\beta/2) \cosh(\eta/2)} \right) \\
&=& e^{-\alpha/2} \cdot e^{-\tteta/2}.
\end{eqnarray*}

For a surface with a cusp, to an orthogeodesic $\eta$ of truncated length $\tteta$ we thus associate the gap 
$$
\psi_\dt(\eta) = e^{-\frac{\alpha + \tteta}{2}}.
$$
\section{Proof of the identity}\label{sec:proof}
We now fix a marking $M$ of $\curves$, and take $X \in \M(\Sigma)$. The identity is a way of breaking up a boundary element of $X$ into gaps that depend on the behavior of the associated orthoray. 

We begin with the case when the boundary curve to be partitioned into gaps is a simple closed geodesic $\beta$. To each $p\in \beta$ corresponds an element $v_p$ of $T^1(X)$, the unique vector orthogonal to $\beta$ in $p$ and aiming inwards towards $X$. There is also the associated {\it orthoray} $\vec{r}_p$ given by forward exponentiating $v_p$. 

We follow $\vec{r}_p$ from its basepoint until one of three things happen: 
\begin{itemize}
\item It forms a loop around an element in $M$, in which case we stop and denote the associated geodesic arc $r_p$.
\item It hits $\partial X$, the boundary of $X$, and is freely homotopic to an orthogeodesic which we denote $\eta_p$. 
\item It wanders forever in $X$ without ever forming a loop around an element of $M$ or hitting $\partial X$. 
\end{itemize}

We will show in the sequel that the last case only happens for a measure $0$ set of points on $\beta$. 

{\it First case: a loop is made}

Let us begin with the first case, where we have an arc $r_p$, which ends exactly after having formed a loop around an element of $M$, say $\alpha_0$, for the first time. We can construct a homotopy class of arc and corresponding orthogeodesic by concatenating $r_p$ with the path that follows $r_p$ with the opposite orientation from the endpoint. We denote the corresponding arc and orthogeodesic by $\eta_0$.

\underline{Key observation:}  By construction: $\eta_0$ is peripheral to $\alpha_0$ and $p$ (or $\vec{r}_p$) belongs to the gap associated to $\eta_0$. 

The latter follows from the dynamics of the behavior of all orthorays in the gap associated to $\eta_0$. 

However, $\eta_0$ might not be unsupportive, that is $\eta_0$ might have a subloop belonging to $M$. If this is the case, we consider, following $\eta_0$ from its basepoint, the first time it forms a loop around an element of $M$, say $\alpha_1$, and denote by $r_1$ be the corresponding oriented subpath. We then construct a new homotopy class of orthogeodesic $\eta_1$ by following, from the endpoint of $r_1$, $\eta_0^{-1}$ back to $\beta$. Similarly to above, the key observation is that the entire gap of $\eta_0$ is contained in the gap of $\eta_1$. And by construction, $\eta_1$ is peripheral to $\alpha_1$.

We can now repeat the above process: if $\eta_1$ is {\it not} unsupportive, then it forms at least one loop around an element of $M$. We can then construct $\eta_2$, containing the gap associated to $\eta_1$, hence the gap associated to $\eta_0$, hence $p$, and so forth.

It is relatively straightforward to see that his process finishes after a finite number of steps, which only depend on the number of self-intersections of $\eta_0$ (but the only thing we really need is that it finishes). The point $p$ belongs to the gap of the resulting orthogeodesic, say $\eta_p$, which is both peripheral to an element of $M$, and unsupportive. 

{\it Second case: $\vec{r}_p$ returns to $\partial X$}

In this case, we stop $\vec{r}_p$ at $\partial X$ to obtain a finite geodesic arc, which in turn corresponds to an orthogeodesic $\eta_p$. We associate $p$ to the gap associated to $\eta_p$. Note that if it ends in a cusp, then the gap is of length $0$. 

To prove the identity we need to show that the gaps are disjoint, and that the set of for ever wandering orthorays is a measure $0$ subset of $\beta$. This latter fact - in the case where $\beta$ is a simple closed geodesic - follows from the same limit set argument as in \cite{Basmajian}, namely because the set of for ever wandering orthorays is a subset of the for ever wandering rays in the Basmajian identity. If however, $\beta$ is (the horocyclic boundary of) a cusp, this requires an additional argument.

\begin{lemma}
Let $X$ be a surface with a cusp, and let $\beta$ be a horocyclic neighborhood of a cusp. Suppose that $M\neq \emptyset$ and/or $\partial X$ contains a boundary geodesic. Then the set of points on $\beta$ which are the basepoints of orthorays of infinite length that are not supported on $M$ or that do not hit $\partial X$ in finite time are a subset of measure $0$ of $\beta$. In other words, the complementary regions of the gaps are of measure $0$. 
\end{lemma}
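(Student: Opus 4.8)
The plan is to split the statement according to the two alternative hypotheses and treat them by quite different means.

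\textbf{Case 1: $\partial X$ has a boundary geodesic.} Here $M$ is irrelevant. A point $p\in\beta$ can only be bad if its orthoray $\vec r_p$ never meets $\partial X$, hence stays in $X$ for all forward time. Writing $X=\Hyp/\Gamma$ with $\Gamma$ a Fuchsian group possessing a boundary geodesic, $\Gamma$ is of the second kind, so its limit set $\Lambda\subset\partial\Hyp$ is Lebesgue--null. Lift $\beta$ to a horocycle $\tilde\beta$ centred at $\xi\in\partial\Hyp$; the orthorays lift to the geodesic rays issuing orthogonally from $\tilde\beta$ away from the horoball, and $p\mapsto(\text{forward endpoint of }\vec r_p)$ is a real--analytic diffeomorphism from $\tilde\beta$ onto an arc of $\partial\Hyp\setminus\{\xi\}$. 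A forward ray staying in $X$ forever must have its endpoint in $\Lambda$ (otherwise it eventually leaves the convex core, i.e.\ crosses $\partial X$). Pulling the null set $\Lambda$ back through this locally bi--Lipschitz map shows the bad set is null; this is Basmajian's argument \cite{Basmajian}.

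\textbf{Case 2: $\partial X$ is all cusps and $M\neq\emptyset$.} Now $X$ has finite area, $\Gamma$ is a lattice, $\Lambda=\partial\Hyp$, and Case~1's argument collapses — this is the case needing "an additional argument." Fix $\alpha\in M$. Since the rays terminating in a cusp have parabolic endpoints, hence come from a countable (null) set of $p$, it suffices to show that for a.e.\ $p\in\beta$ the orthoray $\vec r_p$ forms a loop freely homotopic to $\alpha$. Let $W\subset T^{1}(X)$ be the set of $v$ for which $\g_v$ completes a loop homotopic to $\alpha$ within a uniformly bounded length; $W$ is nonempty (geodesics winding around $\alpha$ exist) and open (a transverse self--intersection creating such a loop persists under perturbation), hence $\mu(W)>0$ for Liouville measure $\mu$. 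By ergodicity of the geodesic flow $(g_t)$ on the finite--volume $T^{1}(X)$, the set $W^{*}=\bigcup_{s\ge0}g_{-s}(W)$ of vectors meeting $W$ in forward time is conull; being open and conull, $W^{*}$ is a $\mu$--continuity set. By Remark~\ref{rem:collar}, $\vec r_p$ forms a loop homotopic to $\alpha$ exactly when the forward orbit of $v_p$ meets $W$, so a bad $p$ has $v_p\notin W^{*}$.

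\textbf{The key estimate.} Let $\mathcal H=\{v_p:p\in\beta\}$ be the closed horocycle orbit around the cusp $\beta$, with its arc--length measure $\operatorname{meas}$ (equal to the measure on $\beta$), and put $B=\{v\in\mathcal H:\text{the forward orbit of }v\text{ misses }W\}$, which contains the bad set up to a countable set. For $t>0$, $g_t(\mathcal H)=\mathcal H_t$ is the closed horocycle around the same cusp at geodesic time $t$, of length $e^{t}\ell(\beta)$, and $g_t$ scales arc length by $e^{t}$; moreover $g_t(B)\subseteq B_t:=\{v\in\mathcal H_t:\text{forward orbit misses }W\}\subseteq\mathcal H_t\setminus W^{*}$, since the forward orbit of $g_t(v)$ is a tail of that of $v$. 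Hence
\[
\operatorname{meas}(B)=e^{-t}\operatorname{meas}\!\big(g_t(B)\big)\le e^{-t}\operatorname{meas}(B_t)\le e^{-t}\big(\ell(\mathcal H_t)-\operatorname{meas}(\mathcal H_t\cap W^{*})\big).
\]
Now invoke the equidistribution of long closed horocycles (Sarnak; or deduce it from mixing of the geodesic flow via a thickening argument): as $t\to\infty$ the normalised arc length on $\mathcal H_t$ tends weakly to normalised $\mu$, so, $W^{*}$ being a continuity set of full measure, $\operatorname{meas}(\mathcal H_t\cap W^{*})/\ell(\mathcal H_t)\to1$, i.e.\ $\ell(\mathcal H_t)-\operatorname{meas}(\mathcal H_t\cap W^{*})=o(e^{t})$. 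Therefore $\operatorname{meas}(B)\le e^{-t}o(e^{t})\to0$, so $\operatorname{meas}(B)=0$, as claimed. (If $M$ consists only of powers of curves peripheral to cusps, run the same argument with $W$ the open set of vectors penetrating some cusp deeply enough to wind once around its peripheral curve, and use Proposition~\ref{prop:looproots} to promote such a deep excursion to a loop around an element of $M$.)

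\textbf{Main obstacle.} The ergodicity input is soft; the genuine difficulty is controlling the pushed horocycles $\mathcal H_t$. The closed horocycle $\mathcal H$ is a Liouville--null, dynamically atypical subset of $T^{1}(X)$ based at a parabolic point, so no generic--orbit or Fubini--over--a--transversal argument reaches it directly; one truly needs the non--divergence / equidistribution of pushed closed horocycles to bound $\operatorname{meas}(B_t)/\ell(\mathcal H_t)$. Granting that, the remainder is the elementary contraction estimate above together with the routine reduction of "reaches $\partial X$" to "has a parabolic endpoint."
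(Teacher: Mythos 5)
Your proof is correct, but in the crucial case it takes a genuinely different and much heavier route than the paper. In the case where $\partial X$ has a geodesic boundary component you do exactly what the paper does, namely Basmajian's limit-set argument \cite{Basmajian}. In the all-cusp case, however, the paper stays soft: since $L(\alpha)$ has positive Liouville measure and the geodesic flow is ergodic, vectors whose forward orbit reaches $L(\alpha)$ come arbitrarily close to any bad vector $v_p$, and by continuity of geodesics every bad basepoint is approximated by basepoints lying in gaps; the paper concludes from this proximity argument that the bad set is negligible. You instead make the statement quantitative: you push the circle of orthogonal vectors by the geodesic flow, use that $g_t$ expands the (unstable) closed horocycle by $e^t$, note that forward orbits of bad vectors stay outside the open conull set $W^*$, and control $\operatorname{meas}(H_t\setminus W^*)$ via equidistribution of long closed horocycles, getting $\operatorname{meas}(B)\le e^{-t}\,o(e^t)\to 0$. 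What your route buys is robustness: mere density of gap points (which is what the soft argument most directly yields) would not by itself exclude a positive-measure Cantor-like bad set, so your contraction-plus-equidistribution estimate gives an airtight measure-zero conclusion; what it costs is invoking Sarnak-type equidistribution of pushed periodic horocycles (or a mixing/thickening argument), a substantially stronger input than the bare ergodicity the paper appeals to. Two small repairs: only the implication ``forward orbit meets $W$ $\Rightarrow$ $\vec r_p$ forms a loop freely homotopic to $\alpha$'' holds in general (a loop may need more than your bounded length $T$ to close up), but this is the only direction your containment $B\subseteq H\setminus W^*$ actually uses; and in your final parenthesis Proposition \ref{prop:looproots} only passes from a power down to its root, so when $\alpha$ is the $k$-th power of a class peripheral to a cusp you should instead take $W$ to consist of vectors whose cusp excursion winds at least $k$ times (such an excursion contains a subloop freely homotopic to the $k$-th power), rather than winding once and trying to promote it.
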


\begin{proof}
If $\partial X$ has boundary curves, then this follows from the same limit set argument as in \cite{Basmajian}. Suppose then that $\partial X$ consists only of cusps. In this case, the geodesic flow is ergodic and by hypothesis $M\neq \emptyset$. 

Consider $p$ with $\vec{r}_p$ which is infinitely long and never forms a loop around an element of $M$. By ergodicity, arbitrarily close to any orthoray $\vec{r}_p$ leaving from $p\in \beta$ is a geodesic that passes through $L(\alpha)$ for any $\alpha \in M$. This is because the latter forms a set of positive measure in $T^{1}(X)$. That means that arbitrarily close to the orthogonal vector $v_p$ corresponding to $\vec{r}_p$ a vector $v$ with angle arbitrarily close to $\frac{\pi}{2}$ that exponentiates to reach $L(\alpha)$. By continuity of geodesic behavior, arbitrarily close to $p$ is an orthoray leaving from $\beta$ belonging to a gap. Hence $p$ is isolated in $\beta$, proving the lemma.
\end{proof}

For the following lemma, we denote the gaps $\beta_i$, indexed by $i\in I$, and think of each gap as an open subset of $\beta$ (hence without its endpoints). 

\begin{lemma}
Let $\beta_i$ and $\beta_j$ be two different gaps. Then $\beta_i \cap \beta_j = \emptyset$. 
\end{lemma}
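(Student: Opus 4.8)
The plan is to show that distinct gaps are disjoint by analyzing what happens to a point $p \in \beta$ that would belong to two gaps simultaneously. Recall that each gap $\beta_i$ is associated to an unsupportive orthogeodesic $\eta_i$ which is peripheral to some $\alpha_i \in M$, and the dynamic interpretation says $\beta_i$ consists of basepoints of orthorays that stay in the immersed half-pants $H_i$ (bounded by $\tilde\eta_i$ and $\tilde\alpha_i$) until they make their first loop around $\alpha_i$. So if $p \in \beta_i \cap \beta_j$, the orthoray $\vec r_p$ is, up to the first loop it forms around an element of $M$, forced to lie in both $H_i$ and $H_j$, and its first loop is simultaneously a loop around $\alpha_i$ and around $\alpha_j$.

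First I would make precise the ``first loop'' construction from Section~\ref{sec:proof}: given $p$ in a gap, following $\vec r_p$ from its basepoint there is a well-defined first moment at which it closes up a loop freely homotopic to an element of $M$; call that element $\alpha$ and that sub-arc $r_p$. I claim this data is intrinsic to $p$ — it does not reference $\eta_i$ — so if $p$ lies in two gaps we must have $\alpha_i = \alpha_j =: \alpha$ and the loop sub-arc is the same. Then $\eta_i$ and $\eta_j$ are both obtained by the same recipe: take $r_p$, backtrack along it from the endpoint to return to $\beta$, obtaining a peripheral arc, and then run the finite ``de-supporting'' procedure (replacing by $\eta_1, \eta_2, \dots$) until the result is unsupportive. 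The key point is that every step of this procedure is determined by the homotopy class of the initial loop $r_p$ (which subloop is hit first, etc.), so $\eta_i$ and $\eta_j$ must be the same orthogeodesic, contradicting $\beta_i \neq \beta_j$.

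The main obstacle — and where I would spend the most care — is justifying that the ``first loop around an element of $M$'' is genuinely well-defined and unique, and that no two \emph{different} marked curves can be the homotopy class of that first loop. This is exactly where coherence of $M$ enters: a loop $\delta$ closed up along $\vec r_p$ cannot have a proper subloop in $M$ (else that subloop would have been closed up strictly earlier, contradicting minimality), and coherence guarantees the curve we stop at is not itself supported by another candidate. I would also need to handle the geometric subtlety that a geodesic ray can ``loop around'' a short geodesic many times: by Remark~\ref{rem:collar} and Proposition~\ref{prop:looproots}, entering the collar of $\alpha^k$ means entering the collar of $\alpha$, so the first full loop is always around the primitive $\alpha$ (if $\alpha \in M$) — this is consistent because coherence forbids both $\alpha$ and a proper power of it in $M$.

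Finally I would tie it together: since $p \in \beta_i \cap \beta_j$ forces the first-loop data to coincide, forces $\alpha_i = \alpha_j$, and forces the de-supporting procedure to produce the same unsupportive peripheral orthogeodesic, we get $\eta_i = \eta_j$ and hence $\beta_i = \beta_j$, the desired contradiction. I expect the write-up to lean on Figures~\ref{fig:universalgap} and the surrounding discussion for the dynamic interpretation of the gap as ``orthorays trapped in $H$ until the first loop around $\alpha$'', and to invoke the finiteness of the de-supporting procedure already established in the second-case analysis of Section~\ref{sec:proof}.
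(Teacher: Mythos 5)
Your proposal has the right overall shape (it matches the paper's reduction: it suffices to show that every point of a type~2 gap $\beta_i$ produces, via the first-loop construction, the same data $\alpha_i$ and $\eta_i$), but it assumes precisely the statement that carries all the content. The gaps are \emph{geometrically} defined intervals on $\beta$ (the interval of length $\psi(\eta_i)$ cut out in the half-pants, between the orthoray spiralling onto $\tilde\alpha_i$ and the ideal orthoray, minus the Basmajian part); the ``dynamic interpretation'' --- that every orthoray based in this interval stays in $H_i$ and makes its \emph{first} loop around an element of $M$ homotopic to $\alpha_i$, yielding the arc $\eta_i$ --- is exactly what the paper's proof of this lemma establishes, and the paper explicitly defers it to the proof (``they will form a loop around $\alpha$ before forming a loop around any other curve in $M$. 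This will be discussed in detail in the proofs of the identities''). By writing ``the dynamic interpretation says $\beta_i$ consists of basepoints of orthorays that \dots make their first loop around $\alpha_i$,'' and then deducing that a point of $\beta_i\cap\beta_j$ has its first $M$-loop simultaneously around $\alpha_i$ and $\alpha_j$, you are citing the conclusion rather than proving it. Your use of coherence is also aimed at a different (and easier) point: showing that the first $M$-loop along a single orthoray has no proper subloop in $M$ does not show that, for a point in the geometric interval $\beta_i$, that first $M$-loop is homotopic to $\alpha_i$. A priori an orthoray in the interval could close up some other marked loop at one of its self-intersections before completing the loop around $\alpha_i$; the paper rules this out by a sweep across the gap starting from the extremal (ideal) orthoray: at the first parameter where a new loop appears before the $\alpha_i$-loop, continuity forces the new loop to be homotopic to $\alpha_i$ or to a proper subloop of it, and coherence of $M$ excludes proper subloops from $M$; iterating over the successive critical points (Figure~\ref{fig:subloops}) shows the behavior is constant on the whole gap. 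That sweep-plus-coherence argument is the missing core of your write-up.

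Two smaller omissions: the lemma covers all gaps, but your argument treats only pairs of type~2 gaps --- you should also note that two Basmajian (type~1) gaps are disjoint because distinct homotopy classes of arcs give distinct projections, and that a type~1 gap cannot meet a type~2 gap since orthorays in a type~1 gap return to $\partial X$ without first closing a loop in $M$. Finally, even granting $\alpha_i=\alpha_j$ and a common first-loop arc, your step ``the de-supporting procedure then forces $\eta_i=\eta_j$'' needs the same uniform-behavior statement (that the arc constructed from any point of $\beta_i$ is homotopic to $\eta_i$, with no de-supporting needed since $\eta_i$ is unsupportive), so it does not bypass the sweep argument either.
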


\begin{proof}
Suppose that $\beta_i$ and $\beta_j$ are of different type, say type 1 and type 2. Then if $p \in \beta_i$, the associated orthoray returns to $\beta$ without forming a loop in $M$, hence it cannot be of type 2. 

Note that two gaps of type 1 are disjoint, as there is a unique orthogeodesic associated to each homotopy class of arc between boundary elements. 

Now suppose that $\beta_i$ and $\beta_j$ are both of type 2.

It suffices to show that for all points of $\beta_i$, the first loop in $M$ that the associated orthoray forms is freely homotopic to $\alpha_i$, and that the associated arc constructed above is always homotopic to $\eta_i$. 

Consider the boundary point $p_i$ of $\beta_i$ corresponding to an ideal orthoray freely homotopic to $\alpha_i$. By continuity, for all points in $\beta_i$ sufficiently close to $p_i$, the associated orthorays first form a loop around $\alpha_i$ creating an associated arc homotopic to $\eta_i$. Note that throughout the gap, the orthoray will continue to form this loop. We just need to analyse what happens for in each self-intersection point of the orthoray that occurs before this loop is formed.

Now suppose $p'$ is the first point in the gap such that the associated orthoray $\vec{r}_{p'}$ forms another loop , say $\alpha'$, before looping around $\alpha_i$. By continuity, as this is the first point, it occurs exactly at a previous intersection point where the associated loop is homotopic to $\alpha_i$. Hence, it is easy to see that $\alpha'$ is a subloop of $\alpha_i$. So either it is homotopic to $\alpha_i$, in which we are happy, or its a proper subloop, in which case its homotopy class cannot belong to $M$ by definition of a coherent marking. Note that this is exactly the crucial moment where coherent markings come into play. And further note that this {\it never} happens if $\eta_i$ is simple.

\begin{figure}[H]
\leavevmode \SetLabels
\L(.147*.7) $\eta_i$\\%
\L(.37*.3) $\alpha'$\\%
\endSetLabels
\begin{center}
\AffixLabels{\centerline{\includegraphics[width=14cm]{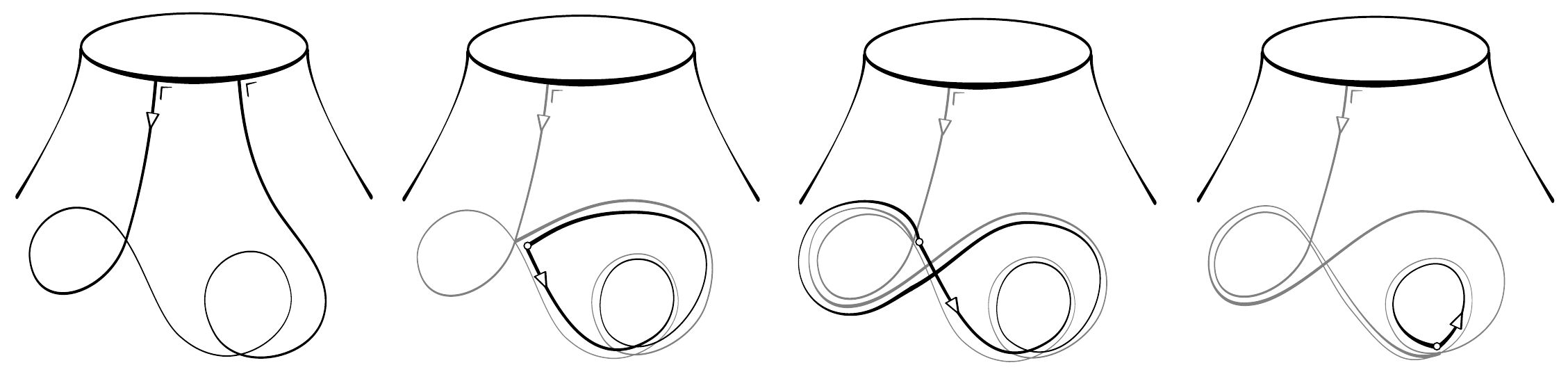}}}
\vspace{-24pt}
\end{center}
\caption{An example orthogeodesic $\eta_i$ and subloops that appear along orthorays in the gaps}
\label{fig:subloops}
\end{figure}

This was only the first critical point (meaning the first time an extra intersection point appeared). As we continue to push orthorays along the gap, the same process occurs again, and again, by the same argument, the new loop that appears is freely homotopic to $\alpha_i$ or to one of its subloops. This process is illustrated in Figure \ref{fig:subloops}.

Hence the only loops that appear that belong to $M$ are all freely homotopic to $\alpha_i$. Thus there is no first point of the gap with a different behavior, which proves that two gaps of type 2 cannot overlap. 
\end{proof}

This proves the (abstract) identities (Theorem \ref{thm:abstract}), and in the next section, we restate and discuss the quantified identities, obtained by replacing the measures of the gaps with their computed values. 

\section{The full identities and final remarks}\label{sec:exampleidentities}

Now that the index set has been properly introduced, that gaps have been quantified, and that the proof that they form a full measure partition of the boundary length is done, we can state the identities. 

\subsection{The general identities}

We begin with a full identity, which decomposes the actual boundary of $X$. In particular if $\partial X$ contains a cusp, it is considered a geodesic of length $0$, all orthogeodesics that have at least one endpoint in it are of infinite length, and contribute nothing to the sum. 

We recall the notation: for a given coherent marking $M$, $\OM$ denotes the orthogeodesics unsupportive of $M$ (for implicit $X$). And $\OMA$ is the subset of $\OM$ which is peripheral to $\alpha$. Note that by definition of being peripheral, orthogeodesics in $\OMA$ leave and return to the same connected component of $\partial X$.

\begin{theorem}
Let $X\in \M(\Sigma)$ be a hyperbolic surface of finite type with $\partial X \neq \emptyset$ containing at least one simple closed geodesic, and $M$ a coherent marking of $\curves$. The lengths of curves and orthogeodesics of $X$ satisfy
\begin{eqnarray*}
\ell(\partial X) = &
\Sum_{\eta \in \OO_{\hspace{-0.1em}M}} & \log \left( \coth^2 (\eta/2) \right)\\
 +& \Sum_{\alpha \in M} \,\, \Sum_{\eta \in \OO_{\hspace{-0.1em}M}^{\alpha}}  &\log\left( \frac{\cosh(\alpha/2) + \sqrt{\cosh^2(\alpha/2) + \cosh^2(\eta/2)-1}}{\sinh(\alpha/2) + \sqrt{\cosh^2(\alpha/2) + \cosh^2(\eta/2)-1}} \right).
\end{eqnarray*}
\end{theorem}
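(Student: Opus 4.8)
The plan is to obtain this identity by summing the one-boundary-at-a-time identity, Theorem~\ref{thm:oneatatime}, over the boundary components of $X$. That per-component identity is in turn nothing but the abstract identity of Theorem~\ref{thm:abstract} (first case, with $\beta$ a boundary simple closed geodesic) after substituting the explicit gap measures computed in Section~\ref{sec:prelim} for the abstract functions $\phi$ and $\psi$: the Type~1 computation gives $\phi(\eta)=2\log\coth(\eta/2)=\log\bigl(\coth^2(\eta/2)\bigr)$ (equivalently $\log\tfrac{t^2}{t^2-1}$ in half-trace form), and the Type~2 computation in the closed-geodesic case gives, for $\eta$ peripheral to $\alpha\in M$,
\[
\psi(\eta)=\log\!\left(\frac{\cosh(\alpha/2)+\sqrt{\cosh^2(\alpha/2)+\cosh^2(\eta/2)-1}}{\sinh(\alpha/2)+\sqrt{\cosh^2(\alpha/2)+\cosh^2(\eta/2)-1}}\right).
\]

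With these substitutions, Theorem~\ref{thm:oneatatime} reads $\ell_X(\beta)=\sum_{\eta\in\OBM}\phi(\eta)+\sum_{\alpha\in M}\sum_{\eta\in\OBA}\psi(\eta)$ for every boundary simple closed geodesic $\beta$, the series converging because, by the disjointness and measure-zero lemmas of Section~\ref{sec:proof}, the gaps partition $\beta$ up to a null set. I would then sum this over all boundary components of $X$. Since $\partial X$ contains at least one simple closed geodesic, $\ell(\partial X)=\sum_{\beta}\ell_X(\beta)$, the sum taken over the boundary geodesics, and cusps may be thrown in as geodesics of length $0$. On the index side, every oriented orthogeodesic leaves from exactly one boundary component, so $\bigsqcup_\beta\OBM=\OM$ and, for each $\alpha\in M$, $\bigsqcup_\beta\OBA=\OMA$, with the proviso that orthogeodesics leaving from a cusp have infinite length; since $\coth^2(\eta/2)\to1$ and the argument of the logarithm defining $\psi$ tends to $1$ as $\eta\to\infty$, those terms equal $0$ and may be added or omitted at will. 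Collecting terms yields exactly the asserted formula.

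The hyperbolic-trigonometric substitutions are the routine computations already carried out in Section~\ref{sec:prelim}, so the genuine work — inherited rather than redone here — lies in the abstract identity itself: that the gaps are pairwise disjoint (the Section~\ref{sec:proof} lemma, in which coherence of $M$ is precisely what rules out a competing loop of $M$ occurring before the one defining the gap) and that the forever-wandering orthorays form a null subset of $\beta$ (the limit-set argument of \cite{Basmajian} in the geodesic-boundary case, ergodicity of the geodesic flow in the cusp case). The one point I would check carefully at this stage is the index-set bookkeeping: that pooling the per-boundary index sets reproduces $\OM$ and $\{(\alpha,\eta):\alpha\in M,\ \eta\in\OMA\}$ with the correct oriented multiplicities, and that the infinite-length contributions arising from cusp endpoints genuinely vanish rather than merely being discarded by convention. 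Everything else is assembly.
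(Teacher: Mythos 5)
Your proposal is correct and follows essentially the paper's own route: the paper likewise obtains this theorem by summing the per-boundary identity of Theorem~\ref{thm:oneatatime} (which is just the abstract identity of Theorem~\ref{thm:abstract} with the gap values $\phi(\eta)=\log\coth^2(\eta/2)$ and $\psi(\eta)$ computed in Section~\ref{sec:prelim} substituted in) over the connected components of $\partial X$, treating cusps as geodesics of length $0$ whose associated orthogeodesics have infinite length and contribute nothing. The disjointness and measure-zero lemmas of Section~\ref{sec:proof} are the same inputs the paper uses, so nothing further is needed.
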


The condition on the boundary containing at least one simple closed geodesic is to avoid the above identity be a sum of $0$s equal to $0$.

The above identity is really a sum of identities taken over the connected components of $\partial X$ that are simple closed geodesics. If we fix a boundary element $\beta$, the identity associated to $\beta$ takes two forms, which only depends on whether $\beta$ is realized on $X$ or as a cusp or as a geodesic.

For $M$ is a coherent marking, $\OBM$ will denote the subset of $\OM$ consisting of orthogeodesics leaving from $\beta$. And $\OBA$ is the subset of $\OB_M$ which is peripheral to $\alpha$. As before, by definition, elements of $\OBA$ leave and return to $\beta$.

If $\beta$ is a boundary simple closed geodesic of $X$, it satisfies:
\begin{eqnarray*}
\ell(\beta) = &
\Sum_{\eta \in \OBM} & \log \left( \coth^2 (\eta/2) \right)\\
 +& \Sum_{\alpha \in M} \,\, \Sum_{\eta \in \OBA}  &\log\left( \frac{\cosh(\alpha/2) + \sqrt{\cosh^2(\alpha/2) + \cosh^2(\eta/2)-1}}{\sinh(\alpha/2) + \sqrt{\cosh^2(\alpha/2) + \cosh^2(\eta/2)-1}} \right).
\end{eqnarray*}

Note that using the half-trace convention and computations (Equations \ref{eq:halftrace1}, \ref{eq:halftrace2} and \ref{eq:halftrace2}), the above becomes Equation \ref{eq:halftraceformulation}. 

If $\beta$ is a cusp:
\begin{equation}\label{eq:cusp0}
2 = 2 \Sum_{\eta \in \OBM} e^{-\teta} +  \Sum_{\alpha \in M} \,\, \left(\Sum_{\eta \in \OBA} e^{-\frac{\alpha+\tteta}{2}}\right)
\end{equation}
where $\teta$ is the truncated length of $\eta$ and, for orthogeodesics that leave and return to $\beta$, $\tteta$ is the doubly truncated length. Note that in this case, every element of $\OBM$ appears exactly once. Indeed, an orthogeodesic in $\OBM$ returning to $\beta$ has infinite truncated length, and thus contributes $0$ to the lefthand sum, but does contribute to the right hand sum with its finite doubly truncated length. If however an orthogeodesic does not return to $\beta$, it cannot be peripheral to an element of $M$. Rewriting this sum taking this into account, and reorganizing the last terms, gives us:

\begin{equation}\label{eq:cusp1}
1 = \Sum_{\eta \in \OBM^{\mbox{\tiny{np}}}} e^{-\teta} +  \frac{1}{2}\Sum_{\alpha \in M} e^{-\frac{\alpha}{2}} \left( \Sum_{\eta \in \OBA} e^{-\frac{\tteta}{2}}\right)
\end{equation}
where $ \OBM^{\mbox{\tiny{np}}}$ is the set of non-peripheral elements of $\OBM$. In particular
$$
\OBM = \OBM^{\mbox{\tiny{np}}} \dot\cup\left(\dot\cup_{\alpha\in M} \OBA\right).
$$
With this viewpoint, Equation \ref{eq:cusp1} gives a nice probability measure on orthogeodesics in $\OBM$. The measure associated to an orthogeodesic $\eta$ in $\OBM^{\mbox{\tiny{np}}}$ is $e^{-{\teta}}$ and to an orthogeodesic $\eta$ in $\OBA$ is $e^{-\frac{\alpha+ \tteta}{2}}/2$. 

\begin{remark} There is something to check in as we have inverted (possibly) infinite sums. To show the sums are indeed the same, it suffices to show that the sum of $e^{-\tteta/2}$ for all $\eta$ peripheral to $\alpha$ and unsupportive of $M$ converges. Depending on $M$, the index set for this sum is a subset of the set $\OB_\alpha^\alpha$ of orthogeodesics peripheral to $\alpha$ and unsupportive of $\alpha$. Now, for {\it any} $\alpha \in \curves$, from \ref{eq:cusp0} with $M=\{\alpha\}$:
$$
2 = 2 \Sum_{\eta \in \OB_\alpha^{\mbox{\tiny{np}}}} e^{-\teta} + e^{-\frac{\alpha}{2}} \left( \Sum_{\eta \in \OB_\alpha^\alpha} e^{-\frac{\tteta}{2}}\right).
$$
In particular, the sum $\sum_{\eta \in \OB_\alpha^\alpha} e^{-\frac{\tteta}{2}}$ converges. 
\end{remark}

If all boundary elements of $X$ are cusps, then the first sum of Equation \ref{eq:cusp1} disappears. In this case we have 

\begin{equation}\label{eq:cusp2}
1 = \frac{1}{2}\Sum_{\alpha \in M} e^{-\frac{\alpha}{2}} \left( \Sum_{\eta \in \OBA} e^{-\frac{\tteta}{2}}\right)
\end{equation}

This time we can also view Equation \ref{eq:cusp2} as a type of probability measure on elements in $M$, where the measure on $\alpha$ is given by
$$
 \frac{1}{2} e^{-\frac{\alpha}{2}} \left( \Sum_{\eta \in \OBA} e^{-\frac{\tteta}{2}}\right). 
 $$
 
 If $X$ has $n$ cusps, by adding up the terms of Equation \ref{eq:cusp2}, one obtains 
 \begin{equation}\label{eq:cusp3}
n = \frac{1}{2}\Sum_{\alpha \in M} e^{-\frac{\alpha}{2}} \left( \Sum_{\eta \in \OMA} e^{-\frac{\tteta}{2}}\right)
\end{equation}

\subsection{Relationship to previous identities}\label{ss:relationships}

{\it The Basmajian identity}

When $M=\emptyset$, there are no marked curves, hence no peripheral orthogeodesics, and $\OM = \OO$. The identity becomes 
$$
\ell(\partial X) = 
\Sum_{\eta \in \OO} 2 \log \left( \coth (\eta/2) \right)
$$
which is exactly the identity proved by Basmajian \cite{Basmajian}. Note it only has content when $\partial X$ contains at least one simple closed geodesic, and has no content when $\partial X$ consists solely of cusps. Note there is a way of making sense of the identity when $X$ only has cusps, or even cone-points, obtained by regrouping terms: see the results from \cite{BPT}.

Note this is not the same time the Basmajian identity has been linked to another identity. Bridgeman and Tan \cite{Bridgeman-Tan} showed how the Basmajian identity is linked to moments of the Liouville measure. In fact it is the first term of a family of identities that you can derive from this point of view, where the second term is the Bridgeman identity \cite{Bridgeman} (and Bridgeman-Kahn \cite{Bridgeman-Kahn} in higher dimensions). Also see \cite{Calegari}. 

{\it McShane type identities}

When $M$ is the set of all simple closed curves (primitive, with both orientations, including those peripheral to boundary), $\OM$ becomes the set of simple orthogeodesics $\OO_0$. This is because any non-simple orthogeodesic supports a closed curve, and as such, supports a simple closed curve. The last implication follows from the observation that any non-simple closed curve has a simple (non-trivial) subloop, and hence it supports a simple closed curve.

In McShane type identities, the sums are over embedded pants with either one or two of its cuffs peripheral to boundary.

Consider an embedded pair of pants $P$ with a single peripheral cuff $\beta$ and two other cuffs $\alpha_1$ and $\alpha_2$. $P$ There are two simple orthogeodesics $\eta$ and $\eta^{-1}$ leaving from $\beta$ and contained in $P$: they have the same traces but opposite orientations (see Figure \ref{fig:mcshanepants}). The measure associated to $P$ is the sum of the measures associated to $\eta$ and $\eta^{-1}$. If $\beta$ is a geodesic, this constitutes 6 terms: the measures $\phi(\eta)$ and $\phi(\eta^{-1})$ (the Basmajian type terms), the terms associated to $\eta$ and $\alpha_1$, resp. $\alpha_2$, and the terms associated to $\eta^{-1}$ and $\alpha_1$, resp. $\alpha_2$. If $\beta$ is a cusp, then the Basmajian type terms disappear.

\begin{figure}[H]
\leavevmode \SetLabels
\L(.402*.65) $\eta^{-1}$\\%
\L(.368*.515) $\eta$\\%
\L(.22*.305) $\alpha_1$\\%
\L(.504*.26) $\alpha_2$\\%
\L(.68*.01) $\alpha$\\%
\L(.37*.85) $\beta$\\%
\L(.56*.8) $\beta$\\%
\L(.81*.84) $\beta'$\\%
\L(.67*.38) $\eta$\\%
\L(.632*.56) $\eta^{-1}$\\%
\L(.70*.82) $\eta'$\\%
\endSetLabels
\begin{center}
\AffixLabels{\centerline{\includegraphics[width=10cm]{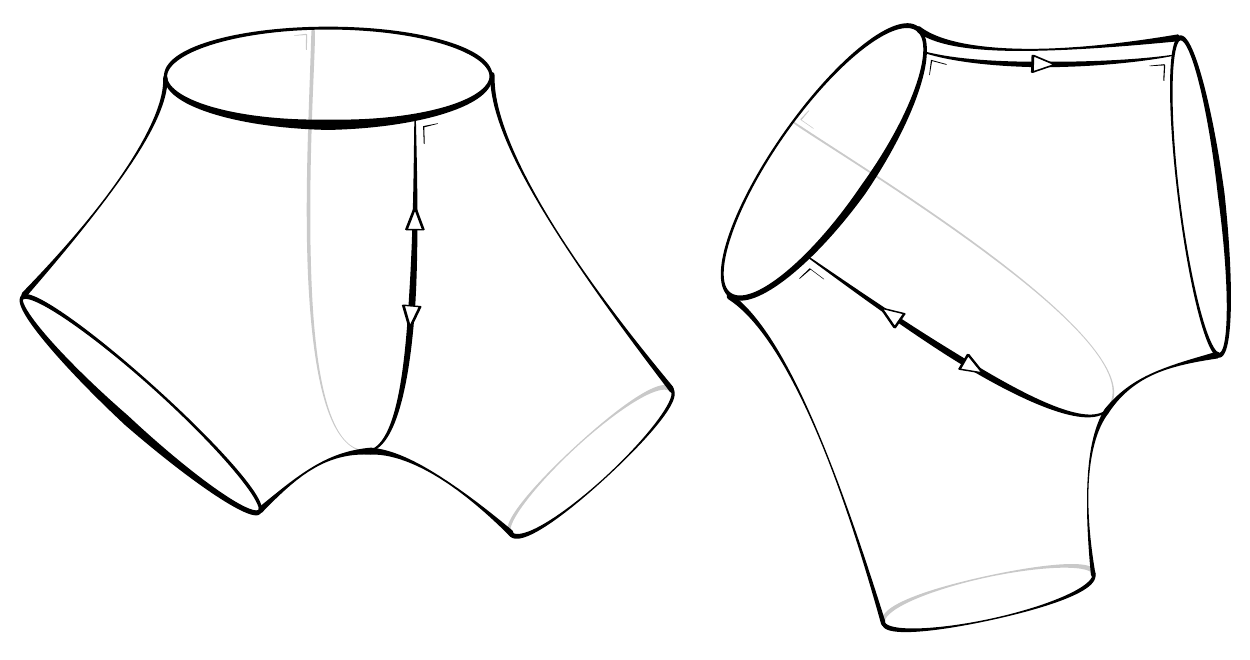}}}
\vspace{-24pt}
\end{center}
\caption{The pairs of pants $P$ on the left and $Q$ on the right.}
\label{fig:mcshanepants}
\end{figure}

To an embedded pair of pants $Q$ with two peripheral cuffs, $\beta$ and $\beta'$, and an additional cuff $\alpha$, in the McShane setup there is a gap associated to $\beta$ and one associated to $\beta'$. There are three simple orthogeodesics in $Q$ that leave from $\beta_1$ (see Figure \ref{fig:mcshanepants}). Two of them, say $\eta$ and $\eta^{-1}$ return to $\beta$, and like before have the same traces and different orientations. The 6 gaps associated to them are as before, where $\alpha$ plays the part of $\alpha_1$ and $\beta'$ plays the part of $\alpha_2$. There is another simple orthogeodesic, say $\eta'$, that goes from $\beta$ to $\beta'$, and to it we associate an additional Basmajian type gap. Again, the sum of the usual McShane measure is the sum of these 7 gaps this time. As before, if $\beta$ is a cusp, the Basmajian type terms disappear.

{\it Final remarks}

The Basmajian and McShane identities have been studied in many contexts. For instance, the Basmajian identity has been generalized to inequalities on maximal representations \cite{Fanoni-Pozzetti}, to higher Teichm\"uller theory \cite{Vlamis-Yarmola}, and to situations with limit sets of different Hausdorff dimension \cite{He}. The McShane identity found its first generalizations in the work of Bowditch \cite{Bowditch2}, Mirzakhani \cite{Mirzakhani} and Tan-Wong-Zhang \cite{Tan-Wong-Zhang}, the latter having generalized it to cone surfaces. Bowditch also provided another proof of the original torus identity \cite{Bowditch1}, recently generalized by Labourie-Tan \cite{Labourie-Tan}. Further generalizations include \cite{Labourie-McShane}, \cite{Huang-Sun}, and \cite{Charette-Goldman}. A very useful survey is \cite{Bridgeman-Tan2}, although of course it doesn't take into account the most recent work. 

It might be interesting to see to what extent the identities investigated in this paper generalize to other contexts. As mentioned previously, there is direct link between the Bridgeman identity and the Basmajian identity \cite{Bridgeman-Tan}, and the two identities share the same index set. In the current identities, the measures are always sums of Basmajian terms (the McShane type terms being infinite sums of Basmajian type terms that share the same initial dynamic behavior). Hence there is an immediate abstract identity related to the Bridgeman identity, obtained by summing the terms corresponding to the same index set. In a related direction, the Luo-Tan identity \cite{Luo-Tan} decomposes the volume of the unit tangent bundle (also see \cite{Hu-Tan}). The index set in these identities is the set of embedded pairs of pants. It seems reasonable to hope to be able to generalize the techniques of the current paper to this context to obtain identities with certain types of immersed pairs of pants.

{\em Address:}\\
Department of Mathematics, University of Luxembourg, Luxembourg \\
{\em Email:}
 \href{mailto:hugo.parlier@unifr.ch}{hugo.parlier@uni.lu}\\

\end{document}